\title{Embeddability in $\R^3$ is NP-hard\thanks{An extended abstract of this
work appeared in the Proceedings of the Twenty-Ninth Annual ACM-SIAM Symposium on Discrete Algorithms~\cite{soda}.
AdM is partially
supported by the French ANR project ANR-16-CE40-0009-01 (GATO). AdM and MT are
partially supported by the Czech-French collaboration project EMBEDS II (CZ:
7AMB17FR029, FR: 38087RM). This work was partially supported by a grant from
the Simons Foundation (grant number 283495 to Yo'av Rieck).
MT is supported by the GA\v{C}R grant 16-01602Y and by Charles
University project UNCE/SCI/004.}}
\author[1]{Arnaud de Mesmay}
\author[2]{Yo'av Rieck}
\author[3]{Eric Sedgwick}
\author[4]{Martin Tancer}
\affil[1]{\small{Univ. Grenoble Alpes, CNRS, Grenoble INP\footnote{Institute of Engineering Univ. Grenoble Alpes}, GIPSA-lab, 38000 Grenoble, France}}
\affil[2]{\small{Department of Mathematical Sciences, University of Arkansas
Fayetteville, AR 72701, USA}}
\affil[3]{\small{School of Computing, DePaul University, 243 S.~Wabash Ave,
Chicago, IL 60604, USA}}
\affil[4]{\small Department
of Applied Mathematics,
Charles University, Malostransk\'{e} n\'{a}m.
25, 118~00~~Praha~1, Czech Republic.}
\date{\today}
\newcommand{\NP}{\textbf{NP}}
\newcommand{\R}{{\mathbb{R}}}
\newcommand{\Q}{{\mathbb{Q}}}
\DeclareMathOperator{\lk}{lk}
\DeclareMathOperator{\interior}{int}
\DeclareMathOperator{\TRUE}{TRUE}
\DeclareMathOperator{\FALSE}{FALSE}
\DeclareMathOperator{\emptlk}{`empty\ link'}
\theoremstyle{plain}
\newcommand{\thm}{theorem}
\newcommand{\lemm}{lemma}
\newcommand{\prop}{proposition}
\newtheorem{theorem}{Theorem}[section]
\newtheorem{lemma}[theorem]{Lemma}
\newtheorem{cor}[theorem]{Corollary}
\newtheorem{claim}[theorem]{Claim}
\theoremstyle{definition}
\newtheorem{notation}[theorem]{Notation}
\newtheorem{definition}[theorem]{Definition}
\theoremstyle{definition}
\newtheorem{remark}[theorem]{Remark}
\newtheorem{example}[theorem]{Example}
\long\def\onefigure#1#2{
\begin{figure*}[tbp]
\begin{center}
#1
\end{center}
\caption{#2}
\end{figure*}
}
\def\immediateFigure#1{%
\smallskip\begin{center}#1\end{center}\smallskip }
\newcommand{\labfig}[2]  
{\onefigure{\mbox{\includegraphics{#1}}}{\label{f:#1} #2} }
\newcommand{\labfigw}[3]  
{\onefigure{\mbox{\includegraphics[width=#2]{#1}}}{\label{f:#1} #3}}
\newcommand{\immfig}[1]  
{\immediateFigure{\mbox{\includegraphics{#1}}}}
\newcommand{\immfigw}[2] 
{\immediateFigure{\mbox{\includegraphics[width=#2]{#1}}}}
\definecolor{orange}{rgb}{1,0.5,0}
\begin{document}

\maketitle

\begin{abstract}
We prove that the problem of deciding whether a $2$- or $3$-dimensional simplicial complex embeds into $\mathbb{R}^3$ is \NP-hard.
Our construction also shows that deciding whether a \(3\)-manifold with
boundary tori admits an \(\mathbb{S}^{3}\) filling is \NP-hard.
The former stands in contrast with the lower dimensional cases which can be solved in linear time,
and the latter with a variety of computational problems in $3$-manifold topology (for example, unknot or $3$-sphere recognition, which are in $ \NP \cap \hbox{co-}\NP$ assuming the Generalized Riemann Hypothesis).
Our reduction encodes a satisfiability instance into the embeddability problem of a $3$-manifold with boundary tori, and relies extensively on techniques from low-dimensional topology, most importantly Dehn fillings on link complements.

\end{abstract}

\section{Introduction}

For integers $d \geq k \geq 1$, let $\textsc{Embed}_{k\rightarrow d}$ be the
algorithmic problem of determining whether a given $k$-dimensional simplicial
complex\footnote{For completeness we give the definition of a complex.
An \em \(i\)-simplex \em (or simply a simplex) is the convex hull of \(i+1\) generic points,
called \em vertices, \em in $\R^m$ for some integer $m$
(by generic we mean that the points are not contained in an affine space of dimension \(i-1\)).
A face of the simplex is a convex hull of any subset of its vertices.
A \emph{(geometric) simplicial complex $K$} is a collection of
simplices in $\R^m$ 
such that (i) if $\sigma$ belongs to $K$, then
any face of $\sigma$ belongs to $K$ as well; and (ii) if $\sigma$ and $\tau$
belong to $K$, then $\sigma \cap \tau$ is a face of both. The \emph{dimension}
of $K$ is the maximum of the dimensions of simplices in $K$. When considering
embeddability of $K$, we mean embeddability of the \emph{polyhedron} $\bigcup
\{\sigma \in K\}$. For computational complexity questions, a geometric
simplicial complex can be `stored' as an abstract simplicial complex containing
only the information which vertices span a simplex in $K$. These data determine
the polyhedron of $K$ uniquely up to homeomorphism. For further reading on
simplicial complexes we refer, for example, to~\cite{matousek03,munkres84}.}
embeds piecewise-linearly in $\mathbb{R}^d$. This problem generalizes
graph planarity, which corresponds to $k=1$ and $d=2$ and can be solved in
linear time (see~\cite{p-pte-16}).
The case $k=d=2$ is also known to be
decidable in linear time using similar techniques~\cite{gr-ltpa2c-79}.
In the past years, several results have appeared studying the computational complexity
of $\textsc{Embed}_{k\rightarrow d}$ for higher values of $k$ and $d$,
exhibiting very different behaviors depending on the relative values of $k$ and
$d$.
In dimension $d \geq 4$, the problem is polynomial-time decidable for $k <
(2d-2)/3$~\cite{ckv-aslep-17} (building
on~\cite{ckmsvw-camis-14,ckmvw-ptchgp-14,kms-pthsem-13}). In these polynomial
cases, known geometric tools (e.g. the Whitney trick and the Haefliger-Weber
theorem) allow to reduce the problem of embeddability to purely
homotopy-theoretical questions, which are then solved using techniques of
\emph{computational homotopy theory}. However, the Haefliger-Weber theorem
generally requires a high codimension to be executed, which typically fails for
low values of $d$.
In the remaining cases in dimension $d \geq 4$, that is, when $k \geq (2d-2)/3$,
the problem is \NP-hard~\cite{mtw-hescr-11}.
Decidability is not known for these \NP-hard cases, except for \(d \geq 5\) and \(k=d\) or \(d-1\),
which are known to be undecidable.
The cases $d=3$ and $k=2,3$ were the first intriguing gaps
left by these results, and the corresponding problems
$\textsc{Embed}_{2\rightarrow 3}$ and $\textsc{Embed}_{3\rightarrow 3}$ were
recently proved to be decidable by Matou\v{s}ek, Sedgwick, Tancer and
Wagner~\cite{mstw-e3sd-14} using an entirely different set of tools than the
other cases. Indeed, they rely extensively on techniques developed
specifically to study the topology of knots and $3$-manifolds, and in
particular on \emph{normal surface theory}.

Normal surfaces are a ubiquitous tool to solve decision problems in
$3$-dimensional topology, and are used in most decision algorithms in
that realm (see for example Matveev~\cite{m-atc3m-03}). However, algorithms relying on normal surfaces generally
proceed by enumerating big solution spaces to find ``interesting''
surfaces, which makes most of them remarkably
inefficient (at least theoretically), with upper bounds on the
runtimes ranging from singly exponential, for example for unknot
recognition~\cite{hlp-ccklp-99}, to merely elementary recursive for $3$-manifold
homeomorphism~\cite{k-ah3mcg-15}. The aforementioned algorithm of Matou\v{s}ek, Sedgwick,
Tancer and Wagner for $\textsc{Embed}_{2\rightarrow 3}$ is no
exception, proving a bound on the runtime that is at least an iterated exponential tower. For some problems, this inefficiency is somewhat justified by hardness
results: this has been an active area of research in recent years~\cite{aht-cckgs-06,bcm-cins-16,bds-chgnph-16,bmw-fnos3m-17,bs-cdtast-13,l-chpl3m-16}. In
particular, Burton, de Mesmay and Wagner~\cite{bmw-fnos3m-17} have shown that the
embeddability of non-orientable surfaces into $3$-manifolds
is \NP-hard (when the $3$-manifold is part of the input). In their
reduction, most of the complexity is encoded in the target
$3$-manifold, as is the case for most of the \NP-hardness results in
$3$-manifold theory (except for two recent hardness results on
classical links by Lackenby~\cite{l-chpl3m-16}). Furthermore, two of the most iconic
$3$-dimensional problems revolving around $\mathbb{S}^3$ or
submanifolds thereof, unknot and $3$-sphere recognition, are
expected \emph{not} to be \NP-hard since they are both in \NP~\cite{hlp-ccklp-99,i-r3s-01,s-srlnp-11} and
co-\NP~\cite{l-ecktn-16,z-ih3sai-16} (note that the co-\NP\  membership for $3$-sphere recognition
assumes the Generalized Riemann Hypothesis). This could give the
impression that $\mathbb{S}^3$ occupies a particular position in this
computational landscape, where problems tend to be easier than in the
general case.

\paragraph*{Our results.} In this article, we undermine 
this idea by proving that testing
embeddability into $\mathbb{R}^3$ is \NP-hard. Precisely, we
prove \NP-hardness of $\textsc{Embed}_{2\rightarrow 3}$ and
$\textsc{Embed}_{3\rightarrow 3}$, as well as for
$\textsc{$3$-Manifold Embeds in $\mathbb{R}^3$}$, the restriction of
$\textsc{Embed}_{3\rightarrow 3}$
when the domain is a $3$-manifold and the
range is $\mathbb{R}^3$.

\begin{\thm}~\label{T:main}
The following problems are all \NP-hard: $\textsc{Embed}_{2\rightarrow 3}$, $\textsc{Embed}_{3\rightarrow 3}$, and
$\textsc{3-Manifold  Embeds  in $\mathbb{R}^3$}$.
\end{\thm}

We observe that $\textsc{Embed}_{3\rightarrow 3}$ reduces to
$\textsc{Embed}_{2\rightarrow 3}$ following the algorithm described
in~\cite[Section 12]{mstw-e3sd-14}, and that
$\textsc{Embed}_{3\rightarrow 3}$ contains $\textsc{3-Manifold Embeds
in $\mathbb{R}^3$}$, so in order to prove Theorem~\ref{T:main}, it is enough to
prove that $\textsc{3-Manifold Embeds in $\mathbb{R}^3$}$ is \NP-hard. Since embeddability in $\mathbb{R}^3$ is equivalent to embeddability in $\mathbb{S}^3$ (unless the input complex is $\mathbb{S}^3$, which will never be the case in our reduction), we will work throughout the paper with $\mathbb{S}^3$ instead of $\mathbb{R}^3$.

In fact we get more.  On the one hand, we only need to consider embeddings of
complexes that are connected orientable 3-manifolds whose boundaries consists of tori (which are often seen as ``simpler'' than 3-manifolds with arbitrary boundary, let alone general complexes), and on the other hand, we may replace \(\mathbb{S}^{3}\) and consider embeddings into any fixed triangulated
 closed irreducible orientable 3-manifold \(M\) admitting no essential torus (these terms
are defined in the next section). i These 3-manifolds include all
closed orientable \em hyperbolic \em 3-manifolds, a family that is of
great interest.  We state this here as a corollary, and provide the
proof in the last section, Section~\ref{section:ProofOfCorollary}:

\begin{cor}
\label{corollary}
Let \(M\) be a triangulated closed orientable irreducible and atoroidal  3-manifold and let \(\mathcal{M}_{\mathrm{tor}}\) be the set of triangulated connected orientable 3-manifolds whose boundaries consists of a (possibly empty) collection of tori.  Then the decision problem:
\begin{quote}
Given \(N \in \mathcal{M}_{\mathrm{tor}}\), does \(N\) embed in \(M\)?
\end{quote}
is \NP-hard.
\end{cor}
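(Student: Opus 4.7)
Since the corollary for \(M = \mathbb{S}^{3}\) is exactly Theorem~\ref{T:main}, the plan is to assume \(M \neq \mathbb{S}^{3}\) and reduce from \textsc{3-Manifold Embeds in \(\mathbb{R}^{3}\)}. Given an input \(N \in \mathcal{M}_{\mathrm{tor}}\) for the \(\mathbb{S}^{3}\) version, I will construct in polynomial time the interior connected sum \(N' := N \mathbin{\#} M\); because the triangulation of \(M\) is fixed, this adds only a constant amount to the size. The goal is then to show that \(N'\) embeds in \(M\) if and only if \(N\) embeds in \(\mathbb{S}^{3}\).

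The forward direction should be straightforward: if \(N\) embeds in \(\mathbb{S}^{3}\) then \(\mathbb{S}^{3} \setminus \interior(N)\) is a disjoint union of solid tori, so some Dehn filling of \(N\) yields \(\mathbb{S}^{3}\). Performing the same filling on \(N' = N \mathbin{\#} M\), far from the connected-sum sphere, yields \(\mathbb{S}^{3} \mathbin{\#} M = M\), so \(N'\) embeds in \(M\).

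For the converse, assume \(N'\) embeds in \(M\). I first intend to verify from the explicit reduction proving Theorem~\ref{T:main} (which assembles \(N\) from link complements) that \(\partial N\) is incompressible in \(N\); an innermost-circle argument on the connected-sum sphere, together with irreducibility of \(N\) and \(M\), will upgrade this to incompressibility of \(\partial N'\) in \(N'\). Next, for each boundary torus \(T \subset \partial N' \subset M\): since \(M\) is closed and atoroidal, \(T\) must be compressible in \(M\); since \(T\) is incompressible in \(N'\), the compressing disk must lie on the \(M \setminus \interior(N')\) side. Compressing \(T\) along this disk and invoking irreducibility of \(M\), the resulting 2-sphere bounds a 3-ball \(B\). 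If \(N'\) lies inside \(B\), then \(N'\) embeds in \(\mathbb{S}^{3}\), and PL Sch\"onflies applied to the connected-sum sphere in \(N \mathbin{\#} M\) yields an embedding \(N \hookrightarrow \mathbb{S}^{3}\). Otherwise \(B \subset M \setminus \interior(N')\), which forces the component of the complement meeting \(T\) to be a solid torus.

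Performing this analysis at every boundary torus will exhibit \(M\) as a Dehn filling of \(N'\); equivalently, there exists a Dehn filling \(\widehat{N}\) of \(N\) with \(\widehat{N} \mathbin{\#} M = M\). Primality of \(M\) (which follows from irreducibility) together with uniqueness of prime decomposition then forces \(\widehat{N} = \mathbb{S}^{3}\), so \(N\) embeds in \(\mathbb{S}^{3}\), completing the reduction. The main obstacle I foresee is rigorously confirming incompressibility of \(\partial N\) for the particular \(N\) produced in the reduction of Theorem~\ref{T:main}; once that is secured, the rest is a standard application of atoroidality, irreducibility, and the prime decomposition theorem.
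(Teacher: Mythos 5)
Your reduction and forward direction agree with the paper's: both take the $N = \mathbb{S}^{3}(L)$ from the proof of Theorem~\ref{T:main}, form $N \mathbin{\#} M$ (the paper constructs this at the level of triangulations by removing a tetrahedron from each), and observe that a satisfiable $\Phi$ gives an embedding of $N$ in $\mathbb{S}^{3}$, hence of $N \mathbin{\#} M$ in $\mathbb{S}^{3} \mathbin{\#} M \cong M$. The divergence, and the genuine gap, is in the converse. Your argument needs both that $N$ is irreducible and that $\partial N$ is incompressible in $N$, and you explicitly flag these as unverified. They are far from automatic: $\mathbb{S}^{3}(L)$ is built by drilling and surgering an unknot-heavy link, and establishing either property would require re-running a substantial part of the Section~\ref{sec:VxLxBoundaryIrreducible} machinery (cable spaces, the Culler--Gordon--Luecke--Shalen and Scharlemann theorems) for the global manifold rather than for the local pieces $V_x(L_x)$. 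Until that is done, the key step --- ``the compressing disk for $T$ must lie outside $N'$'' --- is unsupported, so the Dehn-filling conclusion does not follow.

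The paper sidesteps exactly this by proving a Re-embedding Lemma (Lemma~\ref{lemma:reembading}): if a manifold with toroidal boundary embeds in an irreducible atoroidal $M$, one can \emph{re-embed} it so that the complement is a union of solid tori. This needs no hypothesis on $N$ beyond what the statement already gives. Its proof treats both possibilities for each boundary torus $T_i$: if $T_i$ compresses outward, the complementary piece is already a solid torus; if $T_i$ compresses inward into $X$, one finds a compressing disk disjoint from the connected-sum sphere $S^{*}$, compresses, and \emph{changes the embedding} by discarding the complementary piece and gluing in a fresh solid torus whose meridian meets $\partial D$ once. Your argument covers only the first of these, which is why it must assume boundary incompressibility. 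Two smaller points: you should first rule out complementary components with more than one boundary torus (the paper does this by observing that a nonseparating torus would contradict atoroidality if incompressible and irreducibility if compressible); and the case $N' \subset B$ is simply impossible when $M \ne \mathbb{S}^{3}$, since a once-punctured copy of $M$ cannot embed in a ball, so there is no need to invoke Sch\"onflies there.
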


\paragraph*{Outline and techniques.}The idea of the proof of Theorem~\ref{T:main} is to work with
3-manifolds with boundary tori, such as complements of $1$-manifolds
in $\mathbb{S}^3$, i.e., knot and link complements.  By the Fox
Re-embedding Theorem~\cite{fox}, if such a 3-manifold $M$ embeds in
$\mathbb{S}^3$, then there exists an embedding where
$\overline{\mathbb{S}^3 \setminus M}$ is a collection of solid
tori.\footnote{We only need the Fox Re-embedding Theorem for
3-manifolds with boundary tori which is a special case of
Lemma~\ref{lemma:reembading} that we prove in
Section~\ref{section:ProofOfCorollary}; see Remark~\ref{remark:fox}.} 
 The process of filling a torus boundary with a solid torus is called
a \em Dehn filling\em, and thus, deciding whether these 3-manifolds
embed in $\mathbb{S}^3$ amounts to understanding whether one can
obtain $\mathbb{S}^3$ with a good choice of Dehn fillings.  By a
celebrated theorem of Gordon and Luecke~\cite{gordon-luecke}, the
situation is very constrained when considering knot complements, since
they prove that the only Dehn filling on a nontrivial knot in $\mathbb{S}^3$  yielding $\mathbb{S}^3$  is the trivial one. This was used
by Jaco and Sedgwick~\cite{js-dpsdf-03} to provide an algorithm to
recognize knot complements, which is equivalent to testing the
embeddability of a $3$-manifold with a single torus boundary into
$\mathbb{S}^3$.

By contrast, the situation is far richer for link complements, because one can easily produce links where distinct Dehn fillings all give $\mathbb{S}^3$ (see~\cite{RieckYamashita} for a variety of examples and an attempt at constraining such fillings). Our hardness proof leverages this complexity, by encoding a SAT instance within a link with non-trivial Dehn fillings already carried out on some of its components, yielding a $3$-manifold $M$, such that the instance is satisfiable if and only if $M$ is embeddable in $\mathbb{S}^3$. The idea behind our reduction is rather simple, and we present a simplified version of it as a warm-up at the start of Section~\ref{S:reduction}, but proving that it works, and in particular that there are no \em accidental \em embeddings into $\mathbb{S}^3$, requires significant work, relying on multiple prior results on Dehn fillings~\cite{cgls,scharlemannDiskSphere}, cable spaces~\cite{gordon-litherland} and knotted handlebodies
(see, for example,~\cite{i-tgthku-12} for a discussion).

\section{Preliminaries}
In these preliminaries, we introduce most of the notions that we will use, but in
the rest of the paper, especially Sections~\ref{S:otherdirection}
and~\ref{sec:VxLxBoundaryIrreducible},  we assume some familiarity with knot theory and $3$-manifold topology. Good references for these are Rolfsen~\cite{Ro90} and Schultens~\cite{s-it3m-14}.

\subsection{Dehn surgery on knots and links}
In our exposition we mainly follow~\cite{Ro90}.
We work in the PL category.   By \em 3-manifold \em we mean compact connected orientable \(3\)-dimensional manifold.
A 3-manifold is called \em closed \em if its boundary is empty.
We use standard notation: \(\partial\), \(\mathrm{int}\), and \(N\)
stand for boundary, interior,  and (closed) normal neighborhood, respectively.
The closure of a set \(X\) is denoted \(\overline{X}\).
We write \(\mathbb{S}^{n}\) for the \(n\)-sphere and \(\mathbb{D}^{2}\) for the closed 2-disk.
By \em sphere \em we mean \(\mathbb{S}^{2}\).
We always assume general position.
When considering homology we always use integral coefficients, that is,
by \(H_{1}(X)\) we mean \(H_{1}(X;\mathbb{Z})\).

\paragraph{Compressions and reductions.}
A curve embedded in a surface $F$ is a connected $1$-manifold embedded in \(F\), which is either a \em loop \em (if it is closed) or an \em arc \em with two endpoints on $\partial F$. A curve is \em inessential \em if it is a loop bounding a disk or an arc cobounding a disk with some arc in $\partial F$, \em essential \em otherwise.

A sphere $S$ in a $3$-manifold $M$ is a \em reducing sphere \em if $S$ does not
bound a ball in $M$. A $3$-manifold without reducing spheres is called \em
irreducible\em.  Let $F$ be a surface properly embedded in $M$ or embedded in
$\partial M$. A \em compressing disk \em for $F$ is an embedded disk $D
\subseteq M$ whose interior is disjoint from $F$ and whose boundary is an
essential loop in $F$. A \em boundary compressing disk \em is an embedded disk
$D \subseteq M$ whose interior is disjoint from $F \cup \partial M$,
and whose boundary
$\partial D=f \cup x$ is the union of two arcs, where
$f=\partial D \cap F=D \cap F$ is an
essential arc properly embedded in $F$, and $x=\partial D \cap \partial M= D
\cap \partial M \subset \partial M$. A surface is \em compressible \em if it has
a compressing disk or is a $2$-sphere bounding a ball, \em boundary
compressible \em if it has a boundary compressing disk or is a disk
cobounding a ball with a disk in $\partial M$, and \em
incompressible \em and \em boundary incompressible \em otherwise. A
surface $F \subseteq M$ is \em boundary parallel \em if it is
separating and a component of $M \setminus F$ is homeomorphic to
$F \times I$ with \(F\) corresponding to $F \times \{0\}$
(here \(I=[0,1]\)).  Finally, a surface is \em essential \em if it is
incompressible, boundary incompressible and not boundary parallel. A
$3$-manifold $M$ is \em boundary irreducible \em if its boundary is
incompressible.

\paragraph{The solid torus and genus 2 handlebody.}
The solid torus is \(\mathbb{D}^{2} \times \mathbb{S}^{1}\).  Since it plays a key role in our game
we mention a few facts about it here.  The isotopy class\footnote{For a discussion
of \em isotopy \em of curves see~\cite[Chapter~1]{Ro90}.}
of a curve \(\{\mathrm{pt}\} \times \mathbb{S}^{1}\)
is called the \em core \em of the solid torus.
The disk \(\mathbb{D}^{2} \times \{\mathrm{pt}\}\) is called the \em meridian disk \em
of  \(\mathbb{D}^{2} \times \mathbb{S}^{1}\).  The isotopy class of
boundary of the meridian disk in the boundary of the solid torus,
\(\mathbb{S}^{1} \times \{\mathrm{pt}\} \subset \partial \mathbb{D}^{2} \times \mathbb{S}^{1}\), is called the
\em meridian \em of the solid torus.
We emphasize that we consider the meridian as a curve in the boundary torus and \em not \em in the solid torus.
The boundary of the solid torus is, of course, a torus, and the homology
class of the meridian disk generates the kernel of the homomorphism
\(H_{1}(\partial \mathbb{D}^{2} \times \mathbb{S}^{1}) (\cong \mathbb{Z}^{2})
\to H_{1}(\mathbb{D}^{2} \times \mathbb{S}^{1}) (\cong \mathbb{Z})\)
induced by the inclusion.

Note that if \(\gamma\) is a circle embedded in a 3-manifold  then \(N(\gamma)\) is a solid torus (recall that we only consider orientable 3-manifolds).
Similarly, if \(A\) is an annulus or a M\"obius band embedded in a 3-manifold then \(N(A)\)
is a solid torus.
If \(\gamma\) is the wedge of two circles (that is, the
union of two circles
whose intersection is exactly one point) embedded in a 3-manifold, we call \(N(\gamma)\) the \em genus 2 handlebody. \em
It is easy to see that the genus \(2\) handlebody is well defined, that is, the homeomorphism type
of \(N(\gamma)\) does not depend on the choices made.  In fact, the genus 2 handlebody is
homeomorphic to the 3-manifolds obtained by gluing two solid tori along a disk in their boundaries.

\paragraph{Knots and links.} By a \emph{knot} \(\kappa\) in a 3-manifold \(M\)
(typically \(M = \mathbb{S}^{3}\))
we mean an isotopy class of an embedding
\(\kappa:\mathbb{S}^{1} \to M\); as is customary we identify a knot with its image,
and write \(\kappa \subset M\) for \(\kappa(\mathbb{S}^{1}) \subset M\)
and \(N(\kappa)\) for a regular neighborhood of \(\kappa\) (that is, \(N(\kappa)\) is a solid torus embedded in
\(M\) and \(\kappa\) is its core).
A knot \(\kappa \subset \mathbb{S}^{3}\) is the \emph{unknot} if $\kappa$ is the boundary of an embedded disk in $\mathbb{S}^3$.
A \emph{link} \(L\) is a collection of pairwise disjoint knots, called the \em components \em of \(L\).
Links are considered up to isotopy during which the components are required to remain pairwise disjoint.
A link \(L\) is \emph{split}   
if there is an embedded $2$-sphere disjoint from \(L\), called a \emph{splitting sphere},
separating some components from others.
If there is a splitting sphere for \(\kappa_{1} \sqcup \kappa_{2}\) separating the two components
we say that they are \emph{unlinked}, \emph{linked} otherwise.
An \emph{unlink} is a link in which every component can be split form all remaining components and
every component is unknotted (that is, there exists disjointly embedded balls so that each component is an unknot
contained in its own ball).
The \em exterior \em of a link \(L \subset M\) is \(E(L) := \overline{M \setminus \mathrm{int}N(L)}\).

We view \( \mathbb{S}^{3}\) as \(\mathbb{R}^{3} \cup \{\infty\}\).
A \emph{link diagram} for a link
\(L \subset \mathbb{S}^{3}\) is a projection of \(L\) to the plane $\R^2$ induced by \((x,y,z) \to (x,y)\) such that \(\infty \not\in L\),
every point \((x,y)  \in \R^2\) has at most two preimages on the link,
and furthermore,  whenever $(x,y)$ has two preimages, they correspond to
a transversal crossing at $(x,y)$ of two subcurves of the link.
These conditions are satisfied if \(L\) is in general position.
A point with two preimages is called a \em crossing\em, and we mark which subcurve passes over and which under.   Note that a link diagram
determines a unique link in $\mathbb{S}^3$.  We will exploit this: in order to
construct a link in \(\mathbb{S}^{3}\) we will construct, directly, its
diagram.

\paragraph{Dehn filling and Dehn surgery.}

We now explain the concept of \em Dehn filling\em, and the closely related concept
of \em Dehn surgery\em.  Later on, in Section~\ref{S:otherdirection},
we will show that these two concepts control
embeddability into \(\mathbb{S}^{3}\)  in our setting, and for that reason they are central to our work.
Let \(M\) be a 3-manifold and \(T \subset \partial M\) a torus.

By \em Dehn filling \em
\(T\) we mean attaching a solid torus \(\mathbb{D}^{2} \times \mathbb{S}^{1}\)
to \(T\).  (Note that the boundary of the resulting manifold is $\partial M
\setminus T$.) However, the result of Dehn filling is not uniquely determined by
\(M\) and \(T\), as it depends on the choice of attachment (that is, the choice
of homeomorphism \(\partial (\mathbb{D}^{2} \times \mathbb{S}^{1}) \to T\)); see Figure~\ref{f:slope}.
It can be shown\footnote{For this, and many other facts about Dehn surgery;
see~\cite[Section~9.F]{Ro90}.} that up to homeomorphism the resulting
3-manifold is determined by the homology class $r$ of the image of the meridian of the
attached solid torus in \(H_{1}(T)\); such a class is called a \em slope \em of
\(T\). Then by \(M(r)\) we denote the result of Dehn filling \(T\) along the
slope \(r\).

Formally, a \em slope \em of \(T\) is an element of \(H_{1}(T)\) that is represented by an
embedded circle in \(T\).  (Algebraically, slopes can be characterized  as the \em primitive \em elements of \(H_{1}(T)\),
that is, elements that together with one other element generate  \(H_{1}(T)\).) When the manifold $M$ is $\mathbb{S}^3$, we can parameterize these slopes by rational numbers, this will be explained below.

\begin{figure}
\begin{center}
\includegraphics{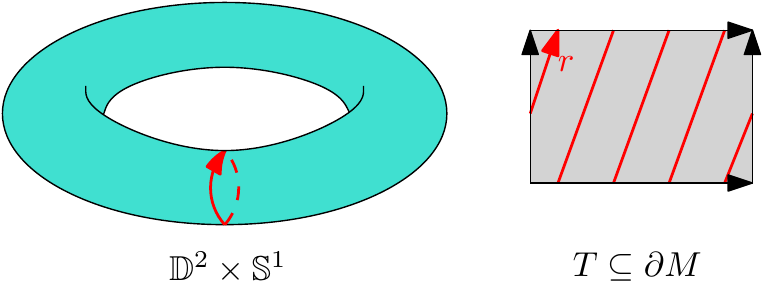}
\caption{Dehn filling: The meridian of the solid torus \(\mathbb{D}^{2} \times
\mathbb{S}^{1}\) is glued to a (representative of a) slope $r$ in $T$.}
\label{f:slope}
\end{center}
\end{figure}

A \em Dehn surgery \em on a knot \(\kappa \subset M\) is  Dehn filling on the component \(\partial N(\kappa)\) of the \(\partial E(\kappa)\).  Note that since \(\partial M\) may not be empty,  \(\partial E(\kappa)\) may have other components; by Dehn surgery on \(\kappa\) we mean Dehn filling, specifically,
the component of \(\partial E(\kappa)\) that corresponds to \(\kappa\), that is, \(\partial E(\kappa) \setminus \partial M\). Dehn surgery on \(\kappa\) is determined
by a choice of slope on \(\partial N(\kappa)\).  As is customary we refer to this slope
as a \em slope of \em \(\kappa\).

\begin{example}[Simple but instructive]
\label{example:DehnFillingSolidTorus}
Let \(M \cong \mathbb{S}^{1} \times
\mathbb{D}^{2}\) be the solid torus.  We consider the manifolds $M(r)$ obtained by
filling the slope $r$ in the boundary of \(M\).  By definition, these are exactly the
manifolds that can be obtained as the union of two solid tori.    An easy
calculation (see, for example,~\cite[Example~2.43]{hatcher02}) shows that the manifold $M(r)$ resulting from filling has homology group \(H_{1}(M(r)) \cong \mathbb{Z}_{|r|}\), where $|r|$ is the
minimal number of times the boundary of the meridian disk of the attached
solid torus meets the boundary of the meridian disk of the solid torus $M$.
Moreover, $M(r) \cong \mathbb{S}^3$  if and only if $|r|=1$. 

This simple example shows that by changing the slope \(r\) we can change the homology $H_1(M(r))$ and thus obtain infinitely many distinct
manifolds as \(M(r)\) (in fact this is always the case).
On the other hand, since there are infinitely many slopes that satisfy \(|r| = 1\) the same manifold (in this case $\mathbb{S}^3$) can be obtained
infinitely many times as $M(r)$ (in fact, this is almost never the case and obtaining the same manifold more than
once is an interesting question that is not fully understood).
\end{example}

\paragraph{Parametrization of slopes.}   While the concept of Dehn surgery is more general, for our purposes, it is sufficient to consider only surgery on a link  \(L \subset \mathbb{S}^{3}\), or a link $L \subset M$, where $M$ is a 3-manifold embedded in $\mathbb{S}^3$.   Since every component
\(\kappa\) of \(L\) is a knot in \(\mathbb{S}^{3}\), it has two distinguished slopes called the \em meridian \em
and \em longitude\em, which intersect once and together form a basis for \(H_{1}(\partial N(\kappa))\).  The \em meridian \em is the unique essential
simple closed curve in
$\partial N(\kappa)$ (up to isotopy) that bounds a disk in the solid torus $N(\kappa)$.  Surgery along
this slope does not change the 3-manifold: we reattach \(N(\kappa)\) exactly
as it was prior to removing it.
The \em longitude \em
is the unique essential simple closed curve in $\partial N(\kappa)$ (up to isotopy)
that bounds an orientable surface in the exterior of the knot $E(\kappa) = \mathbb{S}^3 \setminus \interior N(\kappa)$.  In this paper each component of our link is an unknot and,  in that case, this surface can always be taken to be the disk bounded by the knot (which is allowed to meet other components of the link in its interior).

Having the homology classes of the meridian and longitude in hand we may write them as
\((0,1)\) and \((1,0)\), respectively.  This gives a concrete identification of \(H_{1}(\partial N(\kappa))\) with
\(\mathbb{Z}^{2}\).\footnote{If \(r\) corresponds to \((q,p)\),
the numbers \(q\) and \(p\) have easy topological interpretations: \(q\) is the minimal
number of times that \(r\) intersects the meridian and \(p\) is the minimal
number of times that \(r\) intersects the longitude.}

The slopes of \(\partial N(\kappa)\) are represented by
embedded circles.
An element \((q,p) \in \mathbb{Z}^{2}\)
represents an
embedded circle
if and only if \(q\) and \(p\) are relatively prime.  Finally,
we note that \((q,p)\) and \((-q,-p)\) represent the same curve and hence the same slope.  It is therefore
natural to identify slopes on \(\partial N(\kappa)\) with
\[
\textit{slopes} = \mathbb{Q} \cup \{1/0\}
\]
by identifying \((q,p)\) with \(p/q\).
Note that under this identification the meridian is identified with \(1/0\) and the longitude with \(0/1\).

\paragraph{Dehn surgery on a link with coefficients.} With the above
parameterization for slopes, we are now prepared to introduce our principal
notation for a 3-manifold obtained by Dehn surgery on a link in a sub-manifold of
$\mathbb{S}^3$. For examples, see Figure
\ref{f:simplified_example},  where $M = \mathbb{S}^3$.   Each component of the link will be labeled with a \em surgery coefficient, \em an instruction for Dehn surgery.   In addition to slopes of \(\kappa\) we allow the surgery coefficient of \(\kappa\) to be \(\emptyset\).
We define surgery on \(\kappa\) with coefficient \(\emptyset\) to be $E(\kappa)$,
and call this process \em drilling \(\kappa\) out\em. Thus, we may identify all possible coefficients
with

\[\mathit{coefficients} = \mathbb Q \cup \{\emptyset, 1/0\}.\]

We summarize this construction and the notation we will use (often with
$M=\mathbb{S}^3$).

\begin{notation}
By a \em link with surgery coefficients \em (in $\mathbb{S}^3$) we mean a link
\(L \subset \mathbb{S}^{3}\) for which each component $\kappa$ is labeled with a
slope $r_\kappa \in \mathbb Q \cup \{\emptyset, 1/0\}$.
We emphasize that the notation \(L\) means the link including the coefficients.
Then \(M(L)\), the 3-manifold obtained by
Dehn surgery on \(L\) along the given slopes, is defined in the following way:
	\begin{enumerate}
	\item If \(r_{\kappa} = \emptyset\), we drill \(\kappa\) out.
	\item If \(r_{\kappa} \in \mathbb Q \cup \{1/0\}\), we perform a Dehn surgery
	with coefficient \(r_{\kappa}\).
	\end{enumerate}
\end{notation}

It is interesting to note that a result of Lickorish~\cite{lickorish, Ro90} states
that  every closed orientable 3-manifold has a description of this form, that
is, it can be written as $\mathbb{S}^3(L)$  for some link $L \subset \mathbb{S}^3$
with surgery coefficients.

\begin{example}[Simple but important]
\label{example:lensspace}
Consider the unknot $\kappa \subset \mathbb{S}^3$ with surgery coefficient $p/q$.

Note that the exterior of the unknot \(E(\kappa)\) is a solid torus.  Thus $\mathbb{S}^3(\kappa)$ is the union of two solid tori; or, as described in Example \ref{example:DehnFillingSolidTorus}, is a manifold $M(r)$ obtained by performing Dehn filling on the solid torus $M=E(\kappa)$.  As in that example,  $\mathbb{S}^3(\kappa)$ has homology \(H_{1}(\mathbb{S}^{3}(\kappa))
\cong \mathbb Z_{|r|}\) where $|r|$ is the minimal
  number of times the boundary of the meridian disk of the attached solid torus
  (slope $p/q$) meets the longitude of $\kappa$ (slope $0/1$).   Since that intersection number is
  $|p|$,  \(\mathbb{S}^{3}(\kappa)\) has homology \(H_{1}(\mathbb{S}^{3}(\kappa))
  \cong \mathbb{Z}_{|p|}\).  Moreover, \(\mathbb{S}^{3}(\kappa) \cong \mathbb{S}^{3}\) if
  and only if the two meridians intersect minimally exactly once, i.e. if and
  only if $|p| = 1$.
\end{example}

\paragraph{Modifications of a link with surgery coefficients.} We will utilize two operations on a link $L$, given by Propositions \ref{p:delete10} and \ref{p:simple_twist} below,  so that the resulting link $L'$ describes a surgered 3-manifold homeomorphic to the original, that is $\mathbb{S}^3(L') \cong \mathbb{S}^3(L)$.  (In fact, these operations are sufficient: the links $L$ and $L'$ describe the same 3-manifold $\mathbb{S}^3(L') \cong \mathbb{S}^3(L)$ only when $L$ and $L'$ are related by a sequence of these operations. This is the topic of the Kirby calculus~\cite{kirby}).

In order to state these operations, we first define the linking number of two
knots $\iota$ and $\kappa$:\footnote{We follow~\cite[5.D(3)]{Ro90}; in the same section Rolfsen
provides seven other equivalent definitions of the linking number.}
From now on we assume that $\mathbb{S}^3$ is equipped with a fixed orientation. We also
assume that $\iota$ and $\kappa$ are oriented. Considering a link
diagram for $\iota \cup \kappa$ we consider all crossings where $\iota$ passes
under $\kappa$ and
we assign them with $1$ or $-1$ according to the figure below.
\begin{center}
\includegraphics{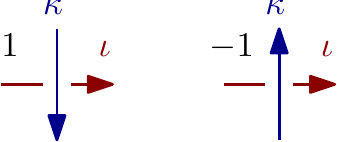}
\end{center}
The \emph{linking number} $\lk(\iota,\kappa)$ is then the sum of these assigned values.
It turns out that the value in the definition is independent of the choice of
the projection for obtaining the link diagram.
It also turns out that
$\lk(\iota,\kappa)
= \lk(\kappa,\iota)$. Note that a pair of knots can have linking number zero yet not be unlinked.

In~\cite[9.H]{Ro90} Rolfsen
describes a general recipe how to modify a link $L$ with surgery coefficients into a link $L'$
with (possibly different) surgery coefficients such that $\mathbb{S}^3(L)$ is
homeomorphic to $\mathbb{S}^3(L')$ (see namely Proposition~2
in~\cite[9.H]{Ro90}).
We have already noted that removing (hence adding) components with coefficient $1/0$ does not change the 3-manifold obtained by surgery:

\begin{\prop}
\label{p:delete10}
Let $L \subset \mathbb{S}^3$ be a link with surgery coefficients, and $L' = L \cup \kappa$, where $\kappa$ is any knot with surgery coefficient $1/0$.  Then $\mathbb{S}^3(L')$ is homeomorphic to $\mathbb{S}^3(L)$.
\end{\prop}

For the second operation, we need the following definition. Suppose one
component, say $\kappa_1$ of a link $L=\kappa_1 \cup \kappa_2 \cup \ldots \cup
\kappa_s$, is unknotted, and let $D$ be a disk that it bounds. Let $t$
be an integer. The link $L_t$
obtained by \emph{twisting $t$ times about $\kappa_1$} is the link obtained by
replacing locally the arcs of $\kappa_2, \ldots, \kappa_s$ going through $D$ by
$t$ helices which screw through a collar of $D$ in the right hand 
sense; see Figure~\ref{f:twisting}. (If $t$ is negative, we have
$|t|$ helices in the left hand sense.)

\begin{figure}
\begin{center}
\includegraphics[width=9cm]{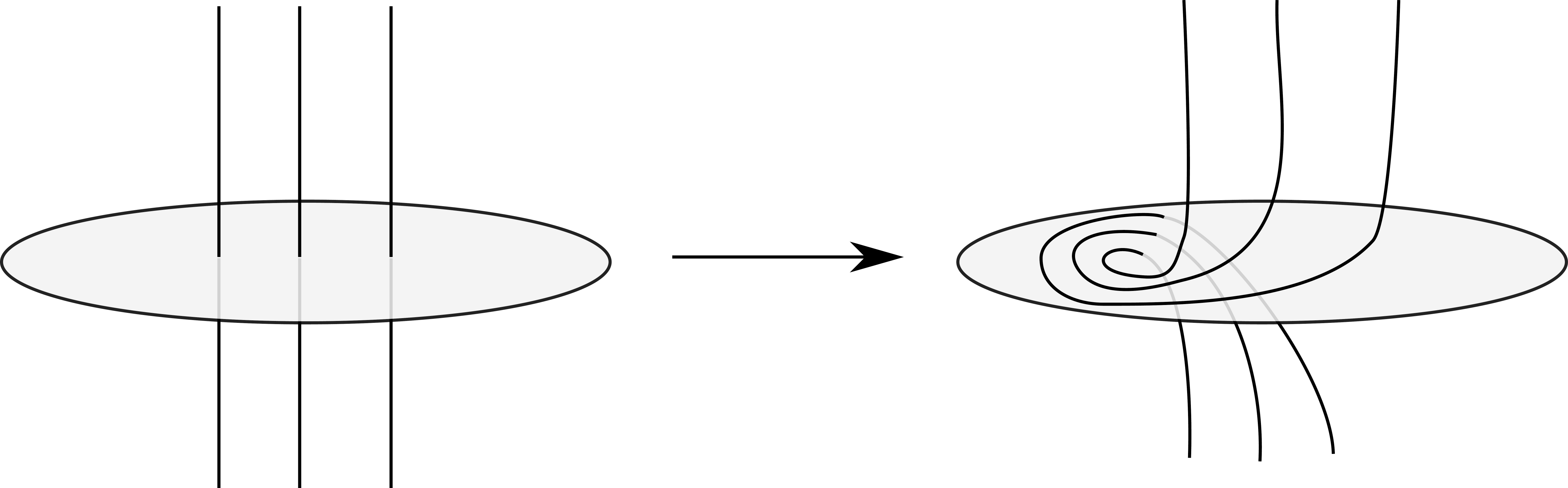}
\caption{Twisting once about an unknot.}
\label{f:twisting}
\end{center}
\end{figure}

\begin{\prop}
\label{p:simple_twist}
Let $L = \kappa_1 \cup \kappa_2 \cup \cdots \cup \kappa_s$ be a link
with surgery coefficients $r_1, r_2, \dots, r_s$, where
$s \geq 2$ and $r_1  \neq \emptyset$ and where $\kappa_1$ is an unknot.\footnote{Rolfsen in~\cite[9.H]{Ro90}
does not discuss the coefficients $\emptyset$. However, allowing some of the
coefficients $r_3, \cdots, r_s$ to be $\emptyset$ does not affect the proof
given in~\cite[9.H]{Ro90}.}

Let $t$ be an integer parameter, and let $L_t = \kappa_1 \cup \kappa_2' \cdots \cup
\cdots \kappa_s'$ be the link obtained by twisting $t$ times about
$\kappa_1$ with
surgery coefficients $r'_1, r'_2, \dots, r'_s$, where
\(r_{i}' = \emptyset\) whenever \(r_{i} = \emptyset\), and otherwise
\(r_{i}'\) is given by

$$ r'_i =
    \begin{cases}
      \frac{1}{t+1/r_1}       & \quad \text{if } i = 1;\\
          r_i + t (\lk(\kappa_i,\kappa_1))^2 & \quad \text{if } i>1;\\
        \end{cases}
$$
Then $\mathbb{S}^3(L)$ is homeomorphic to $\mathbb{S}^3(L')$. (Note,
in particular, that $r'_1 = 0$ if $r_1 = 0$, $r'_1 = 1/t$ if $r_1 = 1/0$, and
  $r'_1 = 1/0$ if $t + 1/r_1 =
0$.)
\end{\prop}

\begin{figure}
\begin{center}
\includegraphics{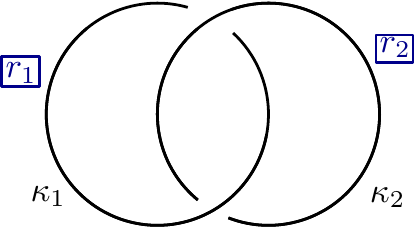}
\caption{The Hopf link (Proposition~\ref{p:simple_twist}, Example~\ref{e:linked_unknots})}
\label{f:basic_link}
\end{center}
\end{figure}

\begin{example} ~
\label{e:linked_unknots}
\begin{enumerate}[$(a)$]
\item
Let $L$ be the \em Hopf link\em, given in 
    Figure~\ref{f:basic_link}, with surgery
coefficients $r_1 = 1/n$ for some $n \in \mathbb{Z}$ and $r_2 = 1/0$ (that is, $\kappa_2$
could be removed). Then by setting $t = -n$ in Proposition~\ref{p:simple_twist},
we obtain $\mathbb{S}^3(L) = \mathbb{S}^3(\emptlk) = \mathbb{S}^3$.

\item
Let $L$ be the Hopf link,  this time with surgery  coefficients $r_1 = -1/k$ and $r_2 = \frac{1-kn}n$ for
  $k, n \in \mathbb{Z}$. Then still $\mathbb{S}^3(L)
  = \mathbb{S}^3(\emptlk) = \mathbb{S}^3$. Indeed, we first apply
  Proposition~\ref{p:simple_twist} with $t=k$ obtaining coefficients $r'_1 =
  1/0$ and $r'_2 = 1/n$. Now, we use part $(a)$ after swapping
  $\kappa_1$ and $\kappa_2$.

\end{enumerate}
\end{example}

We will see in Section~\ref{S:onedirection} a more intricate use of this proposition; see Figure~\ref{f:kirby}.

\subsection{NP-hardness}
\label{subsection:NPhard}

A \emph{Boolean formula} is a formula built from \emph{literals} (a variable or
its negation) using conjunctions and disjunctions. It is \emph{satisfiable} if
there exists a truth assignment for the variables making it true. It is in
\emph{conjunctive normal form} if it is a conjunction of disjunctions, such as
for example the formula $\Phi=(a \vee b) \wedge  (\neg b \vee c) \wedge (c \vee
a \vee \neg b)$. The disjunctions are called the \emph{clauses} of the formula.
It is a \emph{3-CNF} formula if every clause is made of at most $3$ literals.
The \emph{complexity} $|\Phi|$ of a 3-CNF formula $\Phi$ is its number of
literals plus its number of clauses. The \textsc{3-SAT} problem is the problem
of determining whether a given 3-CNF formula is satisfiable.

This problem is well known to be \textbf{NP}-hard (see for
example~\cite{ab-ccma-09}). Furthermore, the \textsc{3-SAT} problem remains
\textbf{NP}-hard when restricted to instances where (1) every clause contains
exactly 3 literals and (2) each clause contains at most once each variable; see for example Garey and
Johnson~\cite{gj-cigtnp-90}. In the rest of the text we will always consider
3-CNF formulae with these additional assumptions.

\section{The reduction}\label{S:reduction}
\renewcommand{\l}{{\pm}}
\newcommand{\lx}{{\pm x}}

\paragraph{Warm-up.} Before we present a formal description of our reduction,
let us start with a simplified construction that will reveal our main
idea. For simplicity, given that this is not our final construction,
most of the discussion here will be focused on an example coming from
a concrete formula.

\begin{figure}[h]
\begin{center}
\includegraphics{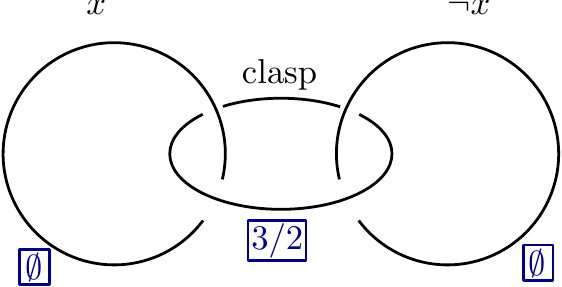}
\caption{The link corresponding to each variable.}
\label{f:variable_link}
\end{center}
\end{figure}

\begin{figure}[h]
\begin{center}
\includegraphics{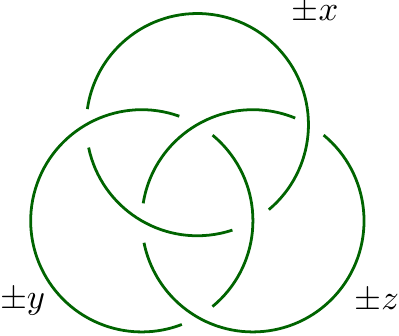}
\caption{The clause diagram - Borromean rings.}
\label{f:borromean}
\end{center}
\end{figure}

Starting from a formula $\Phi$, we build a link $L$ with surgery coefficients in the following way:
\begin{itemize}
\item The link $L$ contains one unknot for each literal of the formula (i.e., for each variable and its negation).
\item For each variable, the two unknots corresponding to it and its negation are interlinked with a \emph{clasp}, which is yet another unknot; see Figure~\ref{f:variable_link}.

\item For every clause of $\Phi$, we consider a triplet of Borromean rings (see Figure~\ref{f:borromean}). Each ring corresponds to a literal of the clause, and is `attached' to the unknot corresponding to its literal.
\item In this link, the surgery coefficients for the literals are $\emptyset$, and those for the clasps are $3/2$.
\end{itemize}

The precise definition of `attachment' (\emph{band sum}) will be given later on, but for now the following example and figure should give the reader a sufficient idea of the construction.

Let us consider the (satisfiable) formula $\Phi = (t \vee x \vee
y) \wedge (\neg x \vee y \vee z)$.
The link $L$
with surgery coefficients corresponding to this formula is depicted on Figure~\ref{f:simplified_example}.

\begin{figure*}[ht]
\begin{center}
\includegraphics{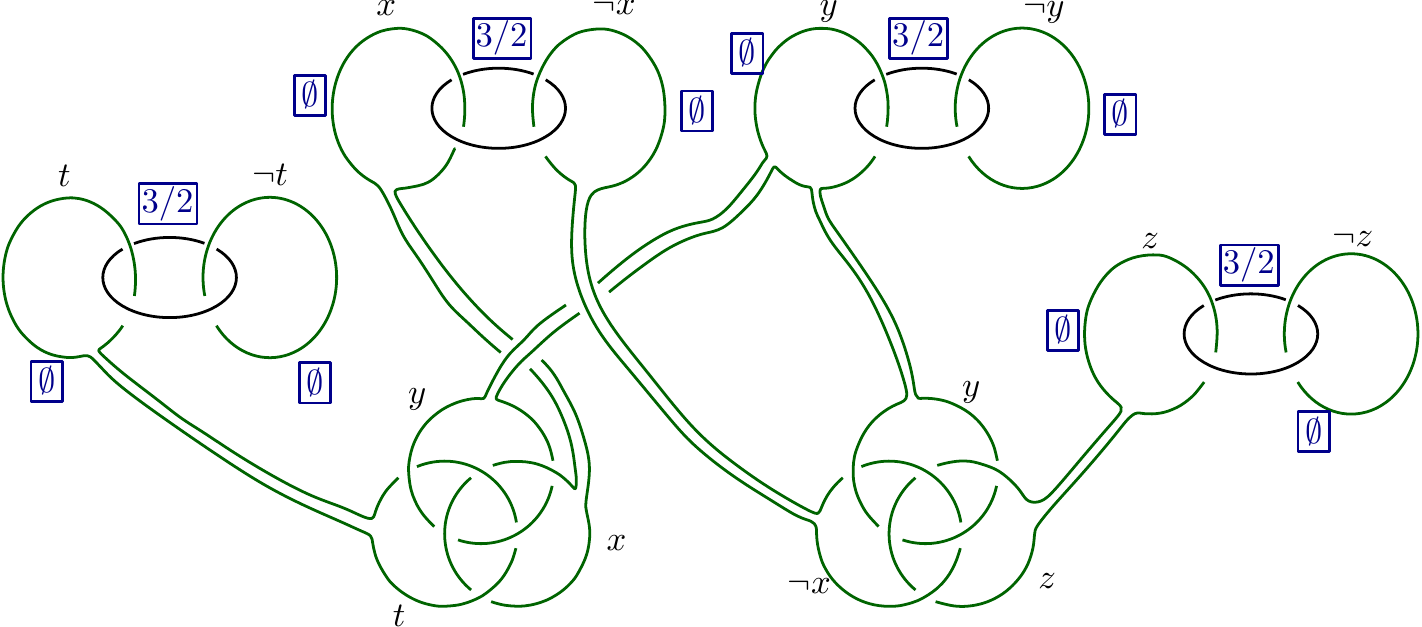}
\caption{ A simplified construction corresponding to the formula $\Phi = (t \vee x \vee
  y) \wedge (\neg x \vee y \vee z)$}
\label{f:simplified_example}
\end{center}
\end{figure*}

We consider the 3-manifold \(\mathbb{S}^{3}(L)\) obtained by Dehn
surgery on \(L\) with the given surgery coefficients. The key feature
of this construction is that a satisfying assignment for $\Phi$
directly reveals that $\mathbb{S}^3(L)$ embeds in
$\mathbb{S}^3$. Indeed, we can obtain the embedding with the following
simple rule: for each $\TRUE$ variable $v$, we fill the unknot
corresponding to $v$ with the slope $1/0$, while for each $\FALSE$
variable $v'$, we fill the unknot corresponding to $\neg v'$ with the
slope $1/0$. All the other unknots are then filled with coefficients
$1/1$. We claim that these fillings yield a $3$-sphere. We explain why
on the example introduced above, and the same argument easily applies
for the general construction.

Let us consider the satisfying
assignment $t = \TRUE$, $x, y, z = \FALSE$. Consequently, we fill the
unknots for the literals $t$, $\neg x$, $\neg y$ and $\neg z$ with
coefficient $1/0$, that is, we remove them from the link. 
See Figure~\ref{f:example_removing}.

\begin{figure*}[ht]
\begin{center}
\includegraphics{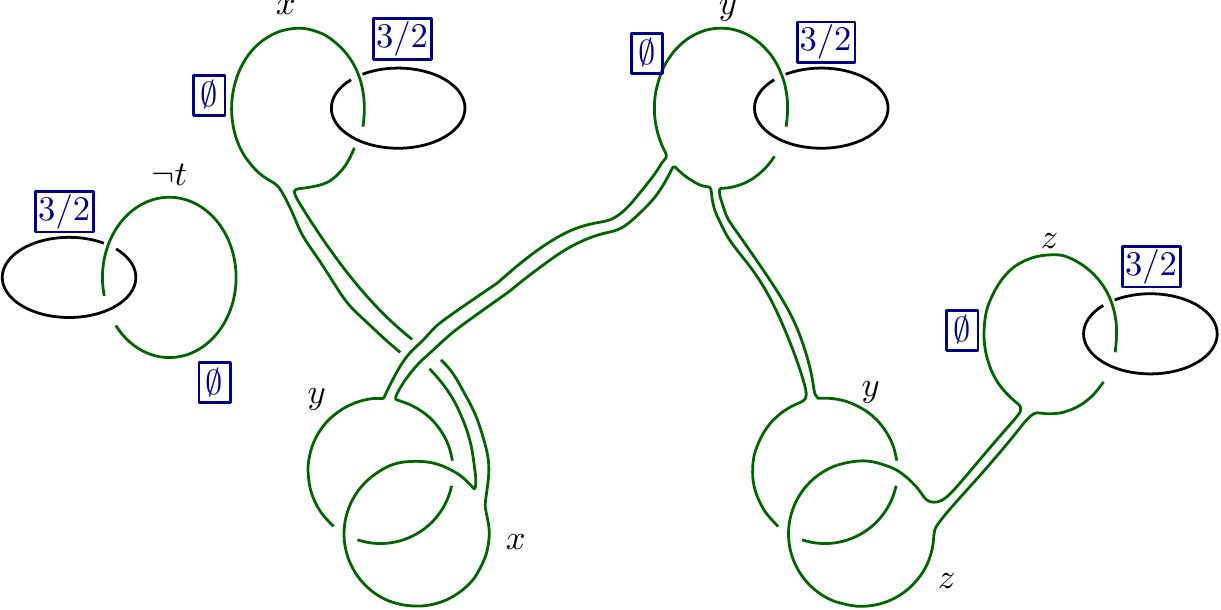}
\caption{Removing unknots with respect to a satisfying assignment.}
\label{f:example_removing}
\end{center}
\end{figure*}

Now, we fill the remaining unknots for literals with coefficients $1/1$.
Because the initial assignment is satisfying, all Borromean rings unlink and we
get in our case four unlinked copies of the Hopf
link, each contained in a ball and with coefficients \(3/2\) and \(1/1\).
This yields \(\mathbb{S}^{3}\) by Example~\ref{e:linked_unknots}~(b)
with $k=-1$ and $n=2$.

The hard part of our reduction is to show that this simple rule,
relying on a satisfying assignment to fill unknots with $1/0$ or
$1/1$, is `essentially' the only way we may get $\mathbb{S}^3$. In
particular we would like to show that we cannot get $\mathbb{S}^3$ for
a non-satisfiable formula.

In fact, we are not sure whether this claim is true for the simple construction
above. However, once we modify the construction slightly, we are able to show
that the resulting 3-manifold embeds into $\mathbb{S}^3$ if and only if it comes from a
satisfiable formula. Now we proceed with a formal description of our final
construction.

\paragraph{Full construction.}

Given a 3-SAT formula $\Phi$, satisfying the conditions stated in Subsection~\ref{subsection:NPhard}, we construct a 3-manifold, $M = \mathbb{S}^3(L)$, where $L \subset \mathbb{S}^3$ is a link  that is described by a planar diagram; our construction will produce the diagram (including surgery coefficients) explicitly.
In order to guarantee that the 3-manifold does not embed when the formula is
not satisfiable, we complicate the construction from the warm-up in two
ways:~(1) we further entangle each clasp and its literals, and~(2) we replace
each literal component with its $(2,1)$-{\em cable}, i.e. a component that
follows twice along and twists once around the original.  We now describe the
construction in detail.

\begin{figure}[h]
\begin{center}
\includegraphics{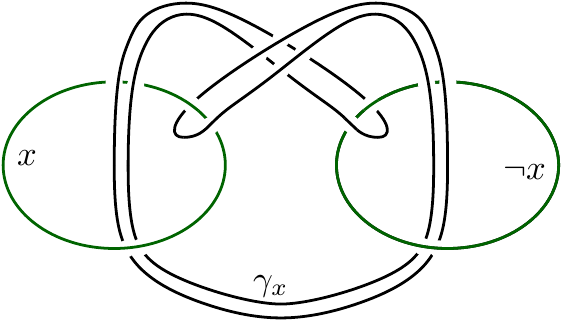}
\caption{The variable diagram.}
\label{f:variable_full}
\end{center}
\end{figure}

\paragraph{Variable Diagram.}
For each variable $x$, place a copy of the diagram depicted in 
Figure~\ref{f:variable_full} 
in the plane.  The diagram depicts a link with three unknotted components.  Label the left and right components with $x$ and $\neg x$, respectively.   We will refer to the central component as the {\em clasp} $\gamma_x$.

If $x$ is a variable, we will let $\lx$ denote either the literal $x$ or the literal  $\neg x$.

\paragraph{Clause Diagram.} For each clause $\pm x \vee \pm y \vee \pm z$ in
the formula, embed a diagram of the Borromean rings in the plane; see Figure~\ref{f:borromean}. 
The properties of the Borromean rings that
we need are (1)~each component is an unknot, and
(2)~removing any component results in a two component link
where the components are not linked.

\paragraph{Connecting the diagram.}
For each literal $\lx$ occurring in a clause, identify (but
don't draw) an embedded arc connecting the literal from the clause diagram to the
literal in variable diagram; see Figure \ref{f:construction_full1}.   These arcs can be chosen so that: a) the interior of each arc is disjoint from every variable and clause diagram, and b) each pair of arcs meet in at most one point and, when so, that point is in the interior of each arc.
Whenever two such arcs
cross, we arbitrarily pick which one lies above the other one.
Note that at this point we have exactly \(16\) crossings in each variable
diagram, \(6\) crossings in each clause diagram, and at most
\({3n}\choose{2}\) crossings between connecting arcs, where \(n\) is the number
of clauses (so \(3n\) is the number of connecting arcs).
Thus the total number of crossings is quadratic in the size of \(\Phi\),
and it is clear that the construction can be done in quadratic time.

\begin{figure*}
\begin{center}
\includegraphics{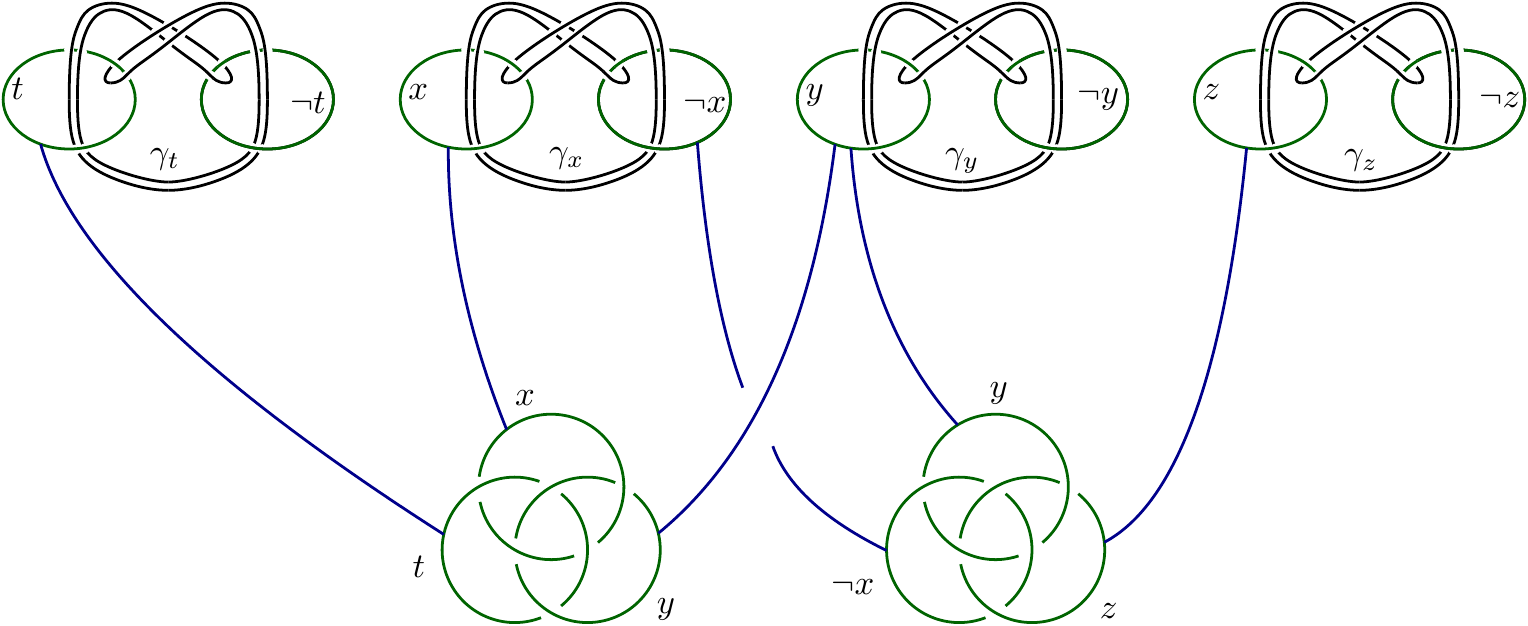}
\caption{The Borromean rings, the variable gadgets and the arcs connecting them
  for the formula $\Phi = (t \vee x \vee y) \wedge
(\neg x \vee y \vee z)$.}
\label{f:construction_full1}
\end{center}
\end{figure*}

\paragraph{Band.} We now modify the diagram by performing a {\em band sum} along each arc, call it $\alpha$,  connecting a variable diagram to a clause diagram:
For each endpoint of $\alpha$ delete a short arc containing that endpoint from
a variable/clause diagram, and then draw two close parallel copies of $\alpha$
that connect the remnants using two disjoint copies of \(\alpha\);
see the two leftmost pictures on 
Figure \ref{f:band_sum_y}. 
Wherever two arcs cross, say \(\alpha\) crossing over \(\alpha'\), we now see four intersections.
In all four intersections we keep the arcs corresponding to \(\alpha\) over the arcs corresponding to \(\alpha'\).
Clearly, the time required for this construction is quadratic in the size of \(\Phi\).

\begin{figure}
\begin{center}
\includegraphics{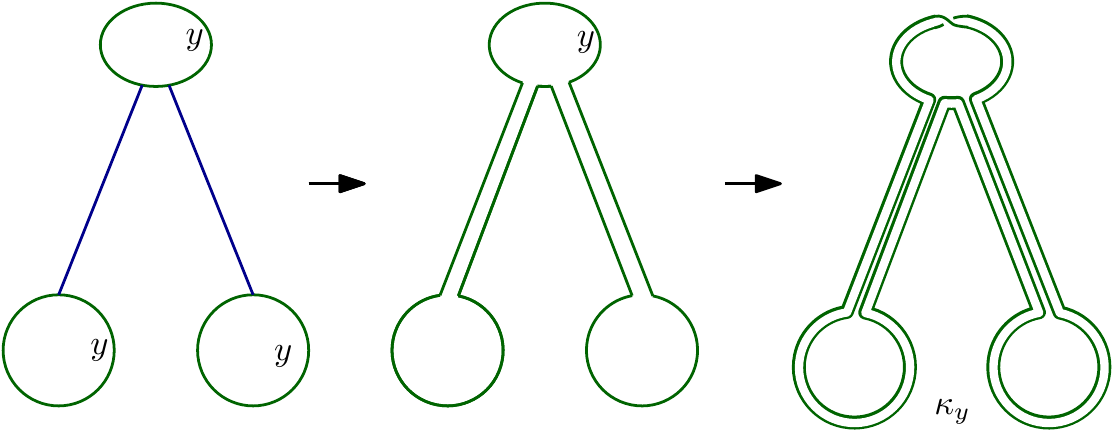}
  \caption{Performing the band sum and then cabling the knot components
  labeled $y$ in the diagram from Figure~\ref{f:construction_full1}.}
\label{f:band_sum_y}
\end{center}
\end{figure}

\paragraph{Cable.}  The final step in our construction of the diagram is \em\((2,1)\)-cabling \em of the components that
correspond to literals (that is, the components that do not correspond to clasps).  This can be described explicitly as follows:
let \(\kappa'\) be a component corresponding to the literal \(\pm x\).  Take two disjoint parallel copies
of \(\kappa'\) and join them together using a single crossing, as shown in the
two rightmost pictures of 
Figure~\ref{f:band_sum_y}. 
Label this component \(\kappa_{\lx}\).  Note that
the time required for this construction, and the number of crossings in the
given diagram, are both quadratic in the size of \(\Phi\).

The 3-manifold corresponding to \(\Phi\) is obtained by Dehn surgery on \(L\).
To complete our construction we determine the surgery coefficients:

\paragraph{The 3-manifold.}  Let $M = M(\Phi) = \mathbb{S}^3(L)$ be the 3-manifold obtained by surgery on the link constructed
above, where the surgery coefficients are \(\emptyset\) on each component that corresponds to a literal and \(3/2\) on each clasp.

\begin{notation}
\label{notation:surfaces}
Before we proceed we summarize the notation that will be used throughout the paper.
Fix a literal \(\lx\).  We will use the following notation for the link
described by the diagram constructed above:
\begin{itemize}
\item[] \(\kappa_{\lx}\): The knot corresponding to \(\lx\) is denoted \(\kappa_{\lx}\).
By construction \(\kappa_{\lx}\) is an unknot.

\item[] $B_\lx$: Note that \(\kappa_{\lx}\) naturally bounds a M\"obius band: it was constructed
from two disjoint parallel curves (that naturally bound an annulus) with a single crossing (that corresponds to
adding a half twist to the annulus).  This M\"obius band is denoted $B_\lx$.
By construction $\kappa_{\lx} = \partial B_\lx$.
\item[]  $\gamma_x$: The {\em clasp} is an unknot that we denote $\gamma_x$.
\item[]  The clasp \(\gamma_{x}\) bounds a disk that we denote \(D_{x}\).
Note that \(D_{x} \cap B_{x}\) consists of exactly one arc (and similarly for \(D_{x} \cap B_{\neg x}\)).

\item[] $L$: The link $L = \bigcup~ (\kappa_x \cup \gamma_x \cup \kappa_{\neg x}$) is the union of the three
components for each variable $x$.  Each clasp $\gamma_x$ is labeled with the surgery coefficient $3/2$, each literal component $\kappa_{\lx}$ is labeled with coefficient $\emptyset$.

\item[] $M=M(\Phi) = \mathbb{S}^3(L)$ is the 3-manifold obtained by surgery on $L$.
\end{itemize}

\end{notation}

\section{$\Phi$ satisfiable $\Rightarrow M$ embeds in $\mathbb{S}^3$} \label{S:onedirection}

First we show that if $M$ is satisfiable, then $M$ embeds into $\mathbb{S}^3$.

\begin{\prop}
\label{proposition:SatisfiableImpliesEmbeds}
If $\Phi$ is satisfiable, then $M = \mathbb{S}^3(L)$ embeds in $\mathbb{S}^3$.
\end{\prop}

\begin{proof}
The idea of the proof is to suitably fill in the `empty tori' in
$\mathbb{S}^3(L)$ corresponding to $\kappa_x$ and $\kappa_{\neg
x}$. That is, it is sufficient to show that we can alter each
coefficient $\emptyset$ to some coefficient in $\Q \cup
\{1/0\}$ so that the resulting 3-manifold is $\mathbb{S}^3$.

Given a satisfying assignment of $\Phi$ we consider every variable $x$
of $\Phi$ and alter the surgery coefficients as follows:
if $x$ is assigned \textsc{True}, we fill $\kappa_x$
with coefficient $1/0$, and if $x$ is assigned \textsc{False} we fill $\kappa_{\neg x}$ with
coefficient ${1}/{0}$. This filling is equivalent to removing $\kappa_x$ or
$\kappa_{\neg x}$ from the link, and from now on we assume that such components have been erased from the diagram.

Since the assignment is satisfying, in each clause $C$ at least one literal is satisfied.
Hence, at least one of the M\"obius bands in the Borromean
rings corresponding to $C$ disappears, and thus the Borromean rings unravel for the other literals involved in $C$. Therefore, we can now use an isotopy
to retract all the bands connecting the clause diagrams to the literals in
the variable gadgets. That is, after this step, we are left with a
link that consists of pairs of components, each pair embedded
in a ball (and the balls are pairwise disjoint), so that each pair
consists of linked unknots, one of which is
the clasp $\gamma_x$, and the other is either $\kappa_x$ (if $x$ is
assigned \textsc{False})
or $\kappa_{\neg x}$ (if $x$ is
assigned \textsc{True}). We now perform a second alteration of the
surgery coefficients, replacing the remaining $\emptyset$ with $3/1$.
We claim that the resulting 3-manifold is $\mathbb{S}^3$: this is
shown using multiple applications of Proposition~\ref{p:simple_twist} as
  pictured in Figure~\ref{f:kirby}.
This shows that the result of the Dehn surgery
on each pair is homeomorphic to the 3-manifold obtained by $- 1/2$
Dehn surgery on the components of an unlink, that is, a link whose components are unlinked unknots.  It is easy to see that this gives $\mathbb{S}^3$.  As an alternative proof that we obtain $\mathbb{S}^3$, we may apply
Example~\ref{e:linked_unknots}(b)
with $k=n=2$ to the penultimate step of Figure~\ref{f:kirby}.

\begin{figure*}
\begin{center}

\def\svgwidth{\textwidth}
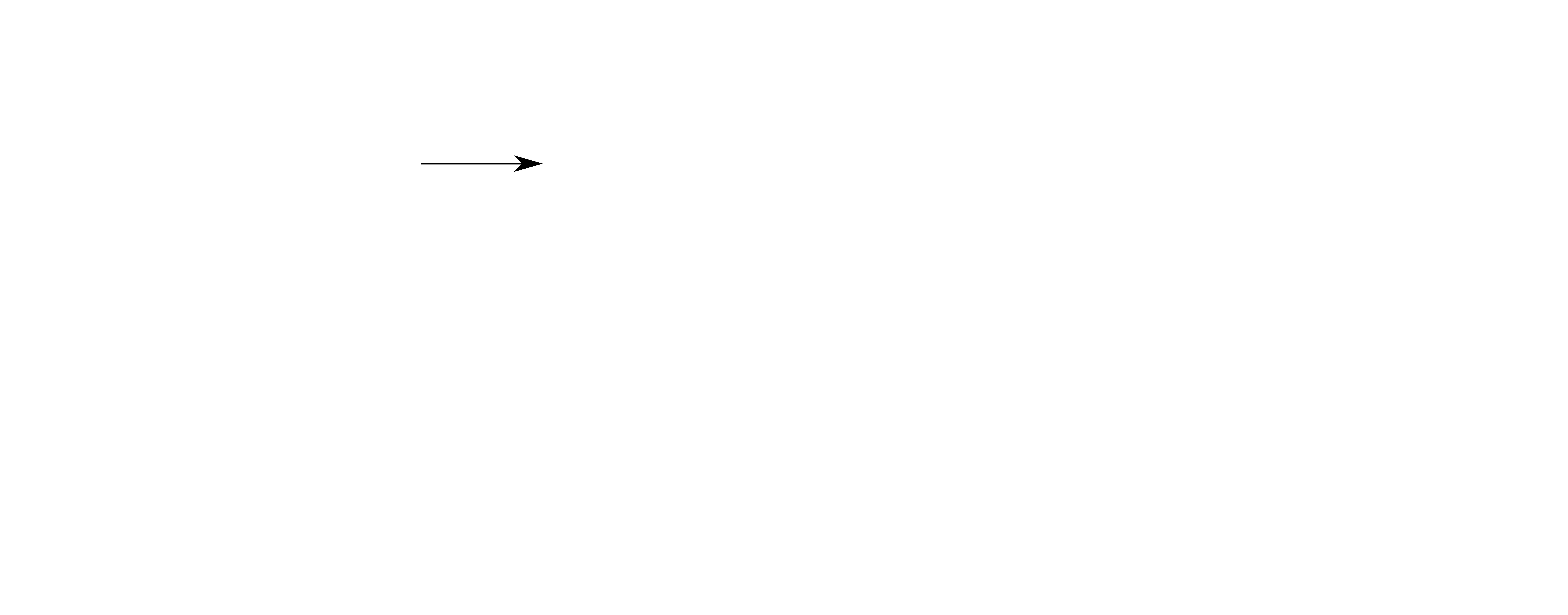

\caption{The resulting components of the link with surgery coefficients
  $L(\Phi)$ after filling the empty tori, and the moves showing that the
  resulting 3-manifold is $\mathbb{S}^3$. The third, fourth and fifth moves are applications of Proposition~\ref{p:simple_twist} to the bolded knot. Note that in the third move  (rightmost arrow),  Proposition~\ref{p:simple_twist} is used with $t=-2$.}
\label{f:kirby}
\end{center}
\end{figure*}

\end{proof}

\section{$\Phi$ satisfiable $\Leftarrow M$ embeds in $\mathbb{S}^3$} \label{S:otherdirection}

\newcommand{\nx}{{\neg x}}

This section is devoted to the proof of the reverse direction, which is much harder.  While Gordon and Luecke \cite{gordon-luecke} showed that a
boundary irreducible 3-manifold whose boundary consists of a single torus admits at most one embedding into $\mathbb{S}^3$,
manifolds with multiple torus boundary components may admit many, even an infinite number, of distinct embeddings
({\it cf.}~\cite{RieckYamashita}).
The main point of the proof  is that $M$ has no {\em accidental} embeddings into $\mathbb{S}^3$.   That is, any embedding is
the result of, for each variable, performing a $1/0$ filling on either the variable or its negation
(but not both).  Interpreting a
$1/0$ filling as \textsc{True} and any other filling as \textsc{False} then allows us to prove:

\begin{\prop}
\label{proposition:EmbedsImpliesSatisfiable}
If $M = \mathbb{S}^3(L)$ embeds in $\mathbb{S}^3$, $\Phi$ is satisfiable.
\end{\prop}

In an effort to expose the underlying structure of our argument, we split the
proof into two sections. First, in this section, we establish the general setup
for proving Proposition~\ref{proposition:EmbedsImpliesSatisfiable}. We identify
a key technical step (Proposition~\ref{prop:genus2compresses}). Still in this
section, we provide a proof of
Proposition~\ref{proposition:EmbedsImpliesSatisfiable} modulo
Proposition~\ref{prop:genus2compresses}. Then, to conclude the proof, we prove
Proposition~\ref{prop:genus2compresses} in
Section~\ref{sec:VxLxBoundaryIrreducible}.

\renewcommand{\L}{L_{\neq 1/0}}

\bigskip

Assume that $M$ embeds in $\mathbb{S}^3$.   The Fox Re-Embedding Theorem~\cite{fox} says that if $M$ embeds in $\mathbb{S}^3$,
then there is an embedding $M \hookrightarrow \mathbb{S}^3$ so that the complement $\mathbb{S}^3 \setminus M$ is the union of
\em handlebodies\em.  Since $\partial M$ consists of tori, these handlebodies are solid tori,\footnote{A \em handlebody
\em is the neighborhood of a graph; in particular, a handlebody whose boundary is a torus is necessarily a solid torus.}
and thus this embedding
is the result of performing a Dehn filling on each component of $\partial M$.

\def\lemb{\ensuremath{L'}}

The Dehn filling of \(\mathbb{S}^3(L)\) that results in \(\mathbb{S}^{3}\) defines a slope on each boundary component that corresponds to a literal \(\lx\).
Let \lemb\ be the link obtained from \(L\) by replacing the surgery coefficients \(\emptyset\)
on \(\kappa_{\lx}\) with the appropriate slope
and, on each clasp $\gamma_x$, retaining the surgery coefficient $3/2$.
Thus $\mathbb{S}^3(\lemb) \cong \mathbb{S}^3$.

We express $\lemb$ as the disjoint union $\lemb = L_{1/0} \cup \L$, where
  $L_{1/0}$
are the components with coefficient $1/0$ and $\L$ are the components with coefficients that are not
$1/0$.  Note then that $\mathbb{S}^3(\L)$ is also homeomorphic to $\mathbb{S}^3$ (erase $L_{1/0}$).

We will use Notation~\ref{notation:surfaces} for the remainder of the section.

\begin{claim}
\label{clm:notBoth10}
For each variable $x$, $\gamma_x \in \L$ and at least one of
$\kappa_x,\kappa_{\neg x}$ is in $\L$.
\end{claim}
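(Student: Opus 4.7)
The first assertion, that each clasp $\gamma_x$ lies in $\L$, is immediate from the definition of $\lemb$: every $\gamma_x$ retains its original surgery coefficient $3/2$, and $3/2 \neq 1/0$.

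For the second assertion, I argue by contradiction. Suppose that both $\kappa_x$ and $\kappa_{\neg x}$ lie in $L_{1/0}$. Since surgery with coefficient $1/0$ does not change the underlying manifold (Proposition~\ref{p:delete10}), deleting these two components from $\lemb$ yields a link $\lemb_0$ with $\mathbb{S}^3(\lemb_0) \cong \mathbb{S}^3(\lemb) \cong \mathbb{S}^3$.

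The key geometric step is to verify that in $\lemb_0$ the clasp $\gamma_x$ is split from the remainder of the link. This should be read off the diagram: in the full construction, $\gamma_x$ has crossings only with the two literal components $\kappa_x$ and $\kappa_{\neg x}$ of its own variable diagram (see Figure~\ref{f:variable_full}), and none of the subsequent modifications of the construction---the band sums connecting literals to clause diagrams, and the $(2,1)$-cabling---touch any clasp; they act exclusively on literal components. Hence once $\kappa_x$ and $\kappa_{\neg x}$ are removed, the disk $D_x$ bounded by $\gamma_x$ can be chosen in the plane of the variable diagram with interior disjoint from every remaining component, so $\gamma_x$ bounds an embedded disk in the complement of $\lemb_0 \setminus \gamma_x$ in $\mathbb{S}^3$ and is thus split off. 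I expect this to be the only nontrivial step: making the splitness argument fully rigorous from the explicit diagram, although it is essentially visual.

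Granting that splitting, $\mathbb{S}^3(\lemb_0)$ decomposes as a connected sum $\mathbb{S}^3(\gamma_x;\, 3/2) \,\#\, \mathbb{S}^3(\lemb_0 \setminus \gamma_x)$. By Example~\ref{example:lensspace}, $\mathbb{S}^3(\gamma_x;\, 3/2)$ is a lens space with $H_1 \cong \mathbb{Z}/3$, so $H_1(\mathbb{S}^3(\lemb_0))$ contains a $\mathbb{Z}/3$ summand, contradicting $\mathbb{S}^3(\lemb_0) \cong \mathbb{S}^3$ and completing the proof.
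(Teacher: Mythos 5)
Your proof is correct and follows essentially the same approach as the paper: delete the two $1/0$-filled literal components, observe that $\gamma_x$ then becomes split from the rest of the link, and use Example~\ref{example:lensspace} to produce a $\mathbb{Z}_3$ summand in $H_1$, contradicting $\mathbb{S}^3(\lemb_0)\cong\mathbb{S}^3$. The paper is terser about the splitting step (it simply asserts that $\gamma_x$ is separated from the rest of $L''$ by a $2$-sphere ``by our construction''), whereas you spell out why the construction leaves $\gamma_x$ meeting only $\kappa_x$ and $\kappa_{\neg x}$; both arguments are the same in substance.
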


\begin{proof}
By construction, for each variable $x$, the clasp $\gamma_x$ has coefficient $3/2$, so $\gamma_x \in \L$.

To complete the proof of the claim assume, for a contradiction, that for some variable \(x\) we have
$\kappa_x,\kappa_{\neg x} \in L_{1/0}$; we may therefore
remove both from \lemb, obtaining \(L''\) so that \(\mathbb{S}^3(L'') \cong \mathbb{S}^{3}\).
However, by our construction, $\gamma_x$ is now separated from all other link
    components of $L''$ (by a $2$-sphere). Example~\ref{example:lensspace} now
    implies that $H_{1}(\mathbb{S}^3(L''))$ has a $\mathbb{Z}_3$-summand which contradicts the fact that $\mathbb{S}^3(L'')$ is a sphere.
\end{proof}

For each literal $\lx$, the component $\kappa_\lx$ was constructed as a
$(2,1)$-cable and it follows that $\kappa_\lx$ is the boundary of a  M\"obius
band $B_\lx$ that is disjoint from all other components of the link.  The clasp
$\gamma_x$ bounds a disk $D_x$ which meets  the M\"obius bands $B_x$ and
$B_{\neg x}$ each in a single arc.
By construction,  for any variable $y \neq x$ we have that
$(B_{x} \cup D_x \cup B_{\neg x}) \cap (B_{y} \cup D_y \cup B_{\neg y}) = \emptyset$. Define the {\em variable link}  to be
$L_x := (\kappa_x \cup \gamma_x \cup \kappa_\nx) \cap \L$, that is, $L_x$ are the
components corresponding to the variable $x$ that have non-$1/0$ surgery coefficients.  Let
$V_x =  N(B_x^* \cup D_x \cup B_\nx^*)$, where $*$ means that we omit $B_\lx$
from the union if $\kappa_\lx \subset L_{1/0}$.  Then, by
Claim~\ref{clm:notBoth10}, $V_x \subset \mathbb{S}^3$ is a genus 1 or 2 handlebody containing the link $L_x$; see Figure \ref{fig:Vx}.

\begin{figure*}[h]
  \center{\includegraphics{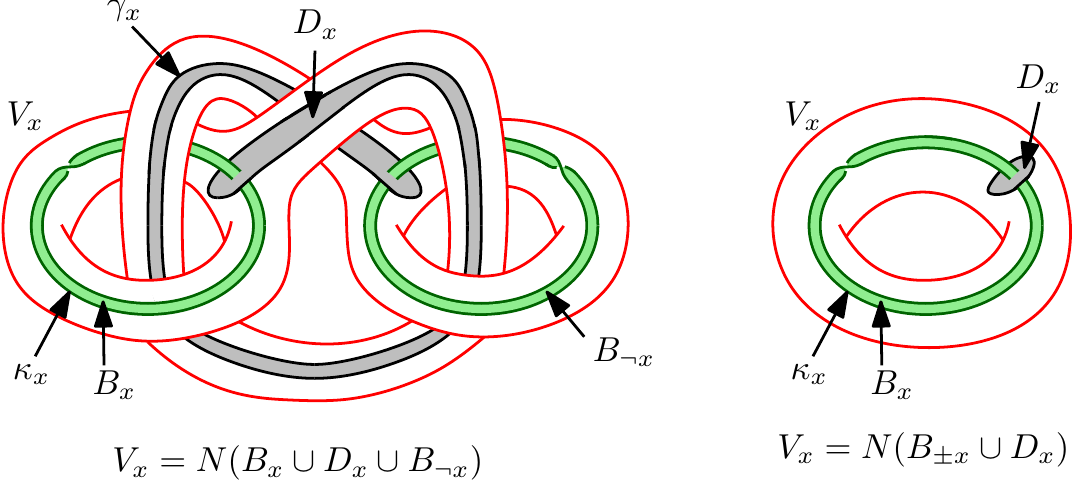}}
\caption{$V_x = N(B_x^* \cup D_x \cup B_\nx^*)$, where $*$ means that we omit $B_\lx$
from the union if $\kappa_\lx \subset L_{1/0}$, either is a handlebody with
  either genus 2 or genus 1. (The picture is given after a homeomorphism
  simplifying $\kappa_{\pm x}$ and for the right picture also $\gamma_x$.)}
\label{fig:Vx}
\end{figure*}

The technical crux of our proof lies in establishing the following proposition:

\begin{restatable}{\prop}{proptechnical}
\label{prop:genus2compresses}
$V_x(L_x)$ is boundary irreducible.
\end{restatable}

For now, we assume it and postpone its proof to Section~\ref{sec:VxLxBoundaryIrreducible}. By construction,  for any variable $y \neq x$ we have that $V_{x} \cap V_{y} = \emptyset$. Let $V = \bigcup V_x$ be the union of these handlebodies over all variables $x$ and $W$ the closure of the complement, $W = \overline{\mathbb{S}^3 \setminus V}$. Of course, every component of $\partial W$ has genus 1 or 2.

\begin{figure}[h]
\center{\includegraphics{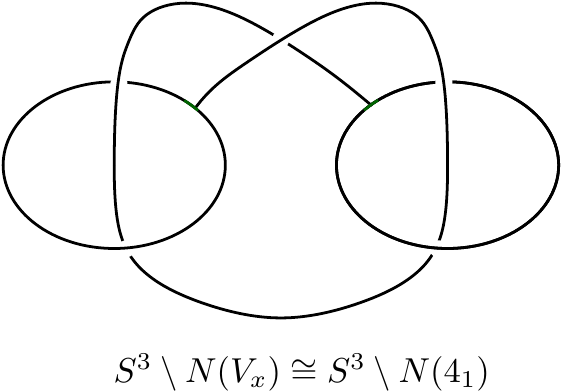}}
\caption{The handcuff graph $4_1$.   The exterior of $V_x$ in $\mathbb{S}^3$ is homeomorphic to the exterior of $4_1$ in $\mathbb{S}^3$.}
\label{fig:handcuff}
\end{figure}

\begin{claim}
\label{clm:genusTwoIncompressibleOutside}
Every genus two component of $\partial W$ is incompressible in $W$.
\end{claim}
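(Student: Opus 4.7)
My plan is to argue by contradiction and reduce to a classical fact about the exterior of the handcuff graph $4_1$ in $\mathbb{S}^3$. Suppose some genus-two component of $\partial W$ is compressible in $W$. By construction any such component is $\partial V_x$ for some variable $x$ for which both $\kappa_x$ and $\kappa_{\nx}$ lie in $\L$ (so that $V_x$ has genus two); let $D \subset W$ be a compressing disk for $\partial V_x$, i.e.\ $\partial D$ is an essential simple closed curve in $\partial V_x$ and $D$ is disjoint from $V_x$.

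First I would observe that, since $W \subset \overline{\mathbb{S}^3 \setminus V_x}$, the disk $D$ is also a compressing disk for $\partial V_x$ in the exterior $E(V_x) := \overline{\mathbb{S}^3 \setminus V_x}$. Hence it suffices to prove the stronger statement that $\partial V_x$ is incompressible in $E(V_x)$ whenever $V_x$ has genus two; this reduces the claim to a local statement about a single variable gadget and removes the other handlebodies $V_y$ from consideration.

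Next I would identify $V_x$ up to ambient homeomorphism of $\mathbb{S}^3$. When $V_x$ has genus two we have $V_x = N(B_x \cup D_x \cup B_{\nx})$, and the 2-complex $B_x \cup D_x \cup B_{\nx}$ deformation retracts onto a handcuff graph $G_x$ whose two loops are the core circles of the M\"obius bands $B_x$ and $B_{\nx}$, joined by an arc running along the core of $D_x$. Reading off the variable diagram (Figure~\ref{f:variable_full}) and simplifying, one checks that $G_x$ is precisely the handcuff graph $4_1$ depicted in Figure~\ref{fig:handcuff}. Consequently $V_x$ is ambient isotopic in $\mathbb{S}^3$ to a regular neighborhood of $4_1$, and $E(V_x)$ is homeomorphic to the exterior of $4_1$.

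Finally, I would invoke the classical fact that $4_1$ is a \emph{nontrivially knotted} handcuff graph, meaning that its exterior in $\mathbb{S}^3$ has incompressible boundary (this is part of the standard theory of knotted handlebodies; see the discussion and references in~\cite{i-tgthku-12}). This directly contradicts the existence of the compressing disk $D$, completing the proof. The main obstacle I expect lies in the identification step: one must verify from the explicit variable diagram that $G_x$ really is the knotted handcuff $4_1$ rather than some simpler handcuff (an unknotted, or merely split, handcuff has a compressible exterior boundary and the whole argument would collapse). This is essentially the recognition of the embedding type of a concrete spatial graph from a planar diagram and, while routine, must be carried out carefully.
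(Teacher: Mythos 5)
Your overall strategy matches the paper's: reduce to incompressibility of $\partial V_x$ in $\overline{\mathbb{S}^3 \setminus V_x}$, identify $V_x$ as a neighborhood of the handcuff graph $4_1$, and invoke knottedness of $4_1$. However, there are two genuine gaps.

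First, you propose to check that $G_x$ is the handcuff $4_1$ by ``reading off the variable diagram (Figure~\ref{f:variable_full}) and simplifying.'' But the M\"obius bands $B_{\pm x}$ are bounded by the cabled components $\kappa_{\pm x}$, and these have been band-summed with the Borromean rings from the clause diagrams (see the ``Band'' and ``Cable'' steps of the construction). So $V_x = N(B_x \cup D_x \cup B_{\neg x})$ is a priori \emph{not} determined by the variable diagram alone. The paper closes this gap with the observation that, because each variable appears at most once in each clause, $L_x$ is band-summed with at most one component of each Borromean ring, and such a band sum does not change the isotopy class of the variable gadget. Without this argument (or an equivalent one), the identification of $\overline{\mathbb{S}^3 \setminus V_x}$ with the exterior of $4_1$ is unjustified.

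Second, you invoke as a ``classical fact'' that the exterior of the nontrivially knotted handcuff $4_1$ has incompressible boundary, citing the discussion in~\cite{i-tgthku-12}. The paper explicitly flags this step as the one requiring care: what Ishii et al.~establish is \emph{irreducibility} in the sense of spatial-graph theory, which is \emph{not} the same notion as having incompressible exterior boundary, and the equivalence (due to~\cite{i-khdshk-15}) is ``hard to extract from the multiple references therein.'' Rather than rely on it, the paper gives a self-contained argument: Tsukui's result that $\pi_1$ of the handcuff exterior is indecomposable under free products, followed by a van Kampen argument ruling out a separating compressing disk, and a banding trick (Figure~\ref{f:banding}) to reduce the nonseparating case to the separating one. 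Your proposal skips this entire argument, so as written it leans on an implication you have not established and that the paper deliberately avoids treating as a citation.
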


\begin{proof}
Any genus two component of $\partial W$ is a boundary $\partial V_x$ for some
  variable $x$ for which both $\kappa_x, \kappa_\nx \subset \L$. Since every variable occurs at most once in each clause, the
  variable link $L_x$ has been connected to and band summed with at most one
  knot of each Borromean ring. Thus this operation did not change its isotopy
  class, and the closure of the complement $\overline{\mathbb{S}^3 \setminus
  V_x}$ is homeomorphic to the exterior of the {\em handcuff graph} $4_1$; see
  Figure~\ref{fig:handcuff}, which was shown by Ishii et al.~\cite{i-tgthku-12}
  to be \emph{irreducible} (for a \emph{different} notion of irreducibility
  than the one we defined in the preliminaries). It is known that a knotted
  handlebody of genus $2$ is irreducible if and only if its exterior has
  incompressible boundary~\cite{i-khdshk-15}, but the proof is hard to extract
  from the multiple references therein, so for completeness we provide another one, tailored to our case, that $\partial V_x$ is incompressible in $\overline{\mathbb{S}^3 \setminus V_x}$.

Let us denote by $H$ the handcuff graph $4_1$. Tsukui~\cite[Example~1]{t-s3s-70} proves that $\pi_1(\mathrm{E}(H))$ is indecomposable with respect to free products,
where here $\mathrm{E}(H) := \overline{\mathbb{S}^{3} \setminus N(H)}$ is the
  exterior of $H$. Suppose ad absurdum that $\partial N(H)$ compresses outside.
  If the compressing disk $D$ separates $\partial N(H)$ (so two tori are obtained)
  then the van Kampen theorem shows that there is a free product decomposition
  unless one of the two sides is simply connected, but every torus in
  $\mathbb{S}^3$ separates $\mathbb{S}^3$ into two components, and neither is
  simply connected.  If $D$ is not separating then banding $D$ to itself we
  obtain a separating compressing disk; see Figure~\ref{f:banding}
  and the explanation in its caption.

\begin{figure*}
\begin{center}
\includegraphics{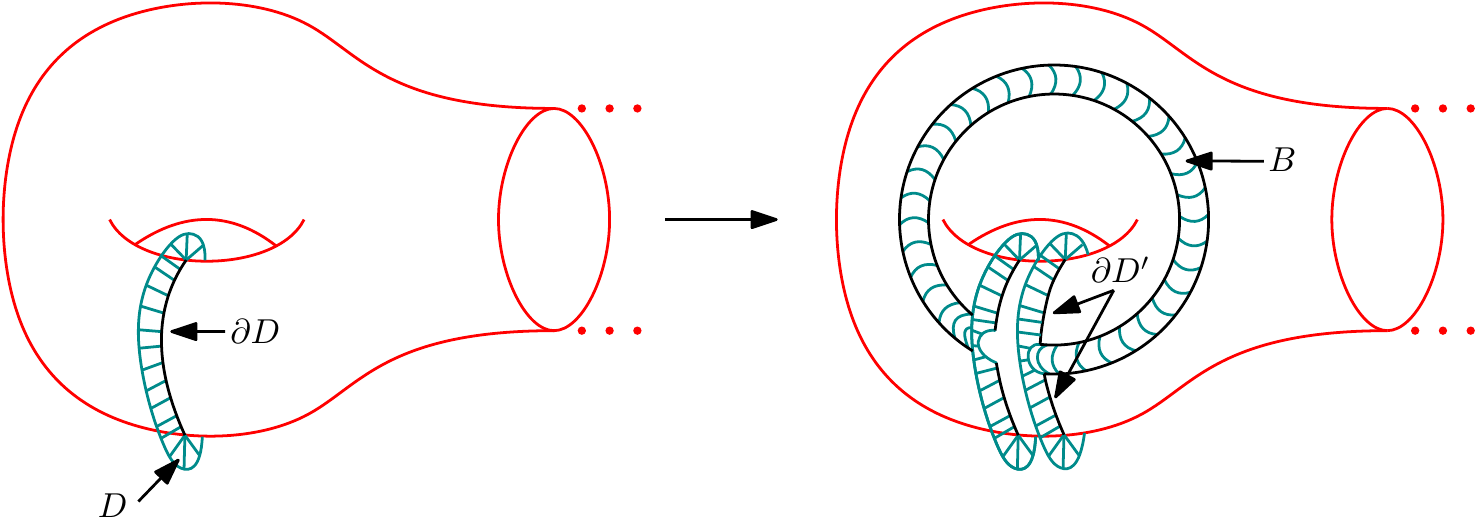}
\caption{Banding the disk $D$ to itself: We consider a disk $D'$ parallel to
  $D$ and a band $B$ connecting the two disks parallel to $\partial N(H)$.
  After interconnecting $D$, $D'$ and $B$ as on the right picture, we obtain a
  separating compressing disk. In this figure, disks are represented by collars of their boundaries.}
  \label{f:banding}
\end{center}
\end{figure*}

Thus, every boundary $\partial V_x$ is incompressible in $\overline{\mathbb{S}^3 \setminus V_x}$.  Since $W \subset \overline{\mathbb{S}^3 \setminus V_x}$, $\partial V_x$ is also incompressible in $W$.
\end{proof}

Therefore any compressible component of $\partial W$ is a torus (we remark that such components may exist).  Let $\Delta \subset W$ be collection of compressing disks, one for each compressible torus.

We claim that we may take the disks in \(\Delta\) to be \em disjointly \em
embedded.\footnote{\label{footnote:innermost}This is an example of an ``innermost disk argument''; see, for example,~\cite[Example~3.9.1]{s-it3m-14}.}
To see this,  we first assume that the disks of \(\Delta\) intersect each other transversely.
Order the disks of \(\Delta\) as \(D_{1},\dots,D_{n}\) and assume that for some
\(i \geq 1\) we have that the \(D_{1},\dots,D_{i}\) are disjointly embedded
(note that this holds for \(i=1\)).  Therefore transversality implies that the
intersection of \(D_{i+1}\) with \(\cup_{j=1}^{i} D_{j}\) is an embedded
1-manifold, and an easy Euler characteristic argument shows that some component
of \(D_{i+1} \setminus (\cup_{j=1}^{i} D_{j})\) is a disk\footnote{This is the \em innermost \em disk.}
 (whose boundary is in \(D_{i+1} \cap D_{j_{0}}\) for some \(1 \leq j_{0} \leq i\)).  We may use the disk to cut and paste \(D_{j_{0}}\) and obtain a new disk that we will use to replace \(D_{j_{0}}\) in \(\Delta\) (without renaming).  The new collection of disks has the exact same boundary, the first \(i\) disks are disjointly embedded, and \(D_{i+1}\) intersects them fewer times than before.  Continuing this way we get a collection \(\Delta\) where the first \(i+1\) disks are disjointly embedded, and the claim follows by induction.

Let $W' := \overline{W - N(\Delta)}$ and $V' := V \cup N(\Delta) = \overline{\mathbb{S}^3 \setminus W'}$.   Thus, $W'$ is
obtained by  cutting $W$ open along the disks $\Delta$ and $V'$ is obtained by
attaching 2-handles to $V$.  Then each link $L_x$ lies in a component of $V'$,
denote it $V_x'$, and note that $\mathbb{S}^3 = V' \cup W'$ where $V' =
\bigsqcup  V_x'$.
Each $V_x'$ is either a genus two handlebody, a
solid torus, or a ball.  Soon we will see that they are all balls.

As $\mathbb{S}^3(\L)$ is homeomorphic to $\mathbb{S}^3$ we can rewrite according to the $\mathbb{S}^3 = W' \cup V'$ decomposition:

 $$ \mathbb{S}^3 \cong W' \cup \Big(\bigcup V_x'(L_x) \Big).$$

\begin{claim}
\label{claim:Wincompressible}
Every non-sphere component of $\partial W'$ is incompressible in $W'$.

\end{claim}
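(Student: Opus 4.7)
The plan is to observe that every non-sphere component of $\partial W'$ arises as an unchanged component of $\partial W$ that is already incompressible in $W$, and then to show that any compressing disk in $W'$ pushes down to a compressing disk in $W$, contradicting this incompressibility. The only real bookkeeping is verifying the first point; once that is in place, the incompressibility argument is short.

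First I would analyze $\partial W'$ component by component. By Claim~\ref{clm:genusTwoIncompressibleOutside}, every genus~$2$ component of $\partial W$ is incompressible in $W$, so no disk of $\Delta$ is attached to such a component; hence all genus~$2$ components of $\partial W$ appear in $\partial W'$ unchanged. Each disk of $\Delta$ is a compressing disk for some torus component of $\partial W$, and cutting a torus open along such a disk converts it to a sphere (an essential simple loop in a torus is always non-separating, so the compression raises the Euler characteristic from $0$ to $2$ while keeping the surface connected). By our choice of $\Delta$, every compressible torus of $\partial W$ is compressed in this way. Consequently, the non-sphere components of $\partial W'$ are precisely the genus~$2$ components of $\partial W$ together with the torus components of $\partial W$ that are already incompressible in $W$.

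Now let $F$ be such a non-sphere component of $\partial W'$, and suppose for contradiction that $D \subset W'$ is a compressing disk for $F$. Since $W' = \overline{W \setminus N(\Delta)}$, the disk $D$ sits inside $W$ and is disjoint from $\Delta$: its interior lies in $\mathrm{int}(W') \subset \mathrm{int}(W) \setminus \overline{N(\Delta)}$, and its boundary lies in $F$, which is a component of $\partial W$ disjoint from $\Delta$. The boundary $\partial D$ remains essential in $F$ because essentiality of a simple loop is intrinsic to the surface $F$. Therefore $D$ is a compressing disk for $F$ in $W$, contradicting the incompressibility of $F$ in $W$ noted above. The key structural inputs are Claim~\ref{clm:genusTwoIncompressibleOutside} and our deliberate inclusion in $\Delta$ of a compressing disk for every compressible torus; there is no need for an innermost-disk argument here, since $D \cap \Delta = \emptyset$ is automatic.
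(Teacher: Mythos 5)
Your proof is correct and takes essentially the same approach as the paper's (very terse) proof: the paper simply notes that by Claim~\ref{clm:genusTwoIncompressibleOutside} the genus-$2$ components of $\partial W'$ are incompressible in $W$ and by construction of $\Delta$ so are the torus components, leaving implicit both the identification of non-sphere components of $\partial W'$ with incompressible components of $\partial W$ and the step that incompressibility in $W$ descends to $W'$. You have filled in exactly these implicit steps, and your reasoning at each is sound.
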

\begin{proof}
By Claim~\ref{clm:genusTwoIncompressibleOutside}, every genus 2 component of $\partial W'$ is incompressible in $W$, and by construction of $\Delta$ every genus 1 component of $\partial W'$ is incompressible in $W$.
\end{proof}

\begin{claim}  For each variable $x$, the 3-manifold $V_x'(L_x)$ is either a ball or is boundary irreducible.
\label{clm:bound_i}
\end{claim}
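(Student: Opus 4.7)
The plan is to perform a case analysis on how $V_x'$ is built from $V_x$, leaning on Proposition~\ref{prop:genus2compresses} in one case and on Alexander's theorem in the other.

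First I would observe that $V_x'$ is obtained from $V_x$ by attaching $2$-handles along those disks of $\Delta$ whose boundaries lie on $\partial V_x$. By Claim~\ref{clm:genusTwoIncompressibleOutside}, every genus-$2$ component of $\partial V$ is incompressible in $W$, so no disk of $\Delta$ attaches to a genus-$2$ component of $\partial V_x$. Moreover, $\Delta$ contains at most one disk per compressible torus. Consequently, at most one $2$-handle is attached to $V_x$ in the formation of $V_x'$, and this happens only when $V_x$ is a solid torus whose torus boundary compresses in $W$.

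In the case where no $2$-handle is attached (either $V_x$ is a genus-$2$ handlebody, or $V_x$ is a solid torus whose torus boundary is incompressible in $W$), we have $V_x' = V_x$ and therefore $V_x'(L_x) = V_x(L_x)$, which is boundary irreducible by Proposition~\ref{prop:genus2compresses}.

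In the remaining case, $V_x$ is a solid torus and exactly one $2$-handle is attached, along a disk $D \in \Delta$ whose boundary is essential in the torus $\partial V_x$. Surgering $\partial V_x$ along an essential loop produces a $2$-sphere, and since the Dehn surgery on $L_x$ takes place in the interior of $V_x'$, the boundary $\partial V_x'(L_x) = \partial V_x'$ is a $2$-sphere. Using the decomposition $\mathbb{S}^3 \cong \mathbb{S}^3(\L) = W' \cup \bigsqcup_y V_y'(L_y)$, the (connected) $3$-manifold $V_x'(L_x)$ embeds in $\mathbb{S}^3$ as a codimension-zero submanifold whose boundary is a PL $2$-sphere; Alexander's theorem then forces $V_x'(L_x)$ to be a ball.

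Essentially all the topological weight of the claim has been absorbed into Proposition~\ref{prop:genus2compresses}; the steps above amount to a short bookkeeping argument combined with one invocation of Alexander's theorem, the only subtlety being to verify that attaching a $2$-handle along a compressing disk really does convert the torus boundary component of $V_x$ into a sphere, which is immediate from the definition of a compressing disk.
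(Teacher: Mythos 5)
Your proof is correct and follows essentially the same approach as the paper: both arguments reduce to the observation that $2$-handles are only ever attached to compressible torus boundaries (by Claim~\ref{clm:genusTwoIncompressibleOutside} and the definition of $\Delta$), so either $V_x' = V_x$ and Proposition~\ref{prop:genus2compresses} applies, or $\partial V_x'(L_x)$ is a sphere and the Alexander/Sch\"onflies theorem in $\mathbb{S}^3$ gives a ball. The paper organizes the case split by the genus of $\partial V_x'(L_x)$ while you organize it by the number of $2$-handles attached, but the content is the same.
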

\begin{proof}
The 3-manifold $V_x'(L_x)$ is embedded in $\mathbb{S}^3$, so if its boundary is
a sphere, then it follows that $V_x'(L_x)$ is a ball.   Thus, it suffices to
show the result for a component  $V_x'(L_x)$ with boundary of positive genus.
And, in that case we have that $V_x'(L_x) = V_x(L_x)$ because 2-handles were
only attached to components with genus 1 boundary, which then become spheres.
Now the claim follows from Proposition \ref{prop:genus2compresses} claiming that $V_x(L_x)$ is boundary irreducible.
\end{proof}

\begin{claim}
\label{clm:balls}
For each variable $x$, the handlebody $V_x'$ is a ball.
\end{claim}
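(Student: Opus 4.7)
The plan is to assume for contradiction that $V_x'$ is not a ball, and derive a contradiction by showing that this would force $\partial V_x'$ to be a closed orientable surface of positive genus in $\mathbb{S}^3$ which is incompressible on both sides, which is impossible.

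First I would observe that $V_x'$ is a handlebody of genus $0$, $1$, or $2$, since it is built from the handlebody $V_x$ of genus at most $2$ by attaching the $2$-handles $N(\Delta)$, each of which turns a toroidal component of $\partial V_x$ into a sphere. If $V_x'$ were not a ball, its boundary would therefore have genus $1$ or $2$. Since $\partial V_x'(L_x) = \partial V_x'$, this would imply that $V_x'(L_x)$ is not a ball, so Claim~\ref{clm:bound_i} would force $V_x'(L_x)$ to be boundary irreducible; in particular $\partial V_x'$ would be incompressible as viewed from the $V_x'(L_x)$ side. Simultaneously, Claim~\ref{claim:Wincompressible} provides incompressibility of $\partial V_x'$ in $W'$.

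The main technical step — and the only delicate point — is to promote incompressibility in $W'$ to incompressibility in the entire complementary region $\mathbb{S}^3 \setminus V_x'(L_x) = W' \cup \bigsqcup_{y \neq x} V_y'(L_y)$. I would use a standard innermost disk argument (compare Footnote~\ref{footnote:innermost}): suppose a compressing disk $D$ for $\partial V_x'$ lies in this region, make it transverse to $\bigsqcup_{y \neq x} \partial V_y'$, and consider an innermost circle $c$ of $D \cap \bigsqcup_{y \neq x} \partial V_y'$ on $D$, bounding a subdisk $D' \subset D$ whose interior is disjoint from all $\partial V_z'$ for $z \neq x$. Then $D'$ lies on one side of the corresponding $\partial V_y'$, either in $V_y'(L_y)$ or in $W'$. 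If $V_y'$ is a ball then $\partial V_y'$ is a sphere and $c$ bounds a disk in $\partial V_y'$ trivially; otherwise, either Claim~\ref{clm:bound_i} (on the $V_y'(L_y)$ side) or Claim~\ref{claim:Wincompressible} (on the $W'$ side) shows that $c$ bounds a disk in $\partial V_y'$. In either case I can replace $D'$ by this disk (slightly pushed off) and strictly decrease the number of intersection circles. Iterating yields a compressing disk for $\partial V_x'$ contained in $W'$, contradicting Claim~\ref{claim:Wincompressible}.

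Once this is established, $\partial V_x'$ is a closed orientable surface of positive genus embedded in $\mathbb{S}^3$ and incompressible on both sides. I would then invoke the classical fact that every closed orientable surface of positive genus embedded in $\mathbb{S}^3$ admits a compressing disk on at least one side (a direct consequence of Alexander duality, which provides a nontrivial cycle in $H_1(\partial V_x')$ dying on each side, combined with the Loop Theorem to upgrade such cycles to embedded compressing disks). This contradicts our assumption, so $V_x'$ must be a ball, completing the proof.
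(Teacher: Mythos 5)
Your proof is correct and follows essentially the same route as the paper's: both arguments combine Claims~\ref{claim:Wincompressible} and~\ref{clm:bound_i} with an innermost-disk argument and the classical fact that a closed surface of positive genus in $\mathbb{S}^3$ is compressible. The only organizational difference is that the paper works directly with the union $S = \bigcup_x \partial V_x'(L_x)$ (so that a compressing disk with interior disjoint from $S$ automatically lies in a single piece $W'$ or $V_y'(L_y)$, and the innermost argument is invoked to upgrade from $S$ to individual $S_x$), whereas you fix one $x$ and run the innermost argument on the outside region to push a putative compressing disk into $W'$ --- the same idea, just applied at a slightly different point in the proof.
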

\begin{proof}

Let $S_x = \partial V_x'(L_x)$ be the boundary of $V_x'(L_x)$ and $S = \bigcup S_x$ the union of all these boundaries.  Of course, $S$ is embedded in $\mathbb{S}^3(L_{\neq 1/0})$, which is homeomorphic to $\mathbb{S}^3$.  It then follows that every component of $S$ is either a sphere, or there is a compressing disk $D$ for $S$, i.e., a disk $D$ for which $D \cap S = \partial D$ is an essential curve in $S$. Such a curve must lie in a component of positive genus. But a compressing disk $D$ is impossible, because then either $D \subset W'$, contradicting Claim \ref{claim:Wincompressible}, or $D \subset V_x'(L_x)$ for some $x$, contradicting Claim \ref{clm:bound_i}.

This shows that \(S\) is incompressible; we claim that more is true: \(S_{x}\) is incompressible for every \(x\).   This is again an innermost disk argument, similar to (and in fact simpler than) the argument used above to show that \(\Delta\) may be taken to be embedded.  Suppose, for a contradiction, that some \(S_{x}\) compresses and let \(D\) be a compressing disk for \(S_{x}\)
(\(D\) is not necessarily a compressing disk for \(S\) since it may intersect other components of \(S\)).  We assume that \(D\) intersects \(S\) transversally and minimizes \(\#(D \cap S)\) among all such disks.  An easy Euler characteristic argument shows that some component of \(D\) cut open along \(S\) is a disk; this is the \em innermost \em disk (of course, if \(D \cap S = \emptyset\) the innermost disk is \(D\) itself).  The minimality assumption implies that the boundary of the innermost disk is essential in \(S\), and thus the innermost disk is a compressing disk for \(S\), which we showed above cannot exist.  This contradiction shows that \(S_{x}\) is incompressible for every \(x\).
\end{proof}

This observation allows us to complete the proof of Proposition~\ref{proposition:EmbedsImpliesSatisfiable}:

\begin{proof}[Proof of Proposition~\ref{proposition:EmbedsImpliesSatisfiable}] For each literal $\lx$, assign the value $\lx := \textsc{True}$  if $\kappa_\lx
\subset L_{1/0}$ and $\lx := \textsc{False}$ if $\kappa_\lx \subset \L$.  By Claim \ref{clm:notBoth10}, there is no variable $x$ with both $x$ and $\neg x$ set to \textsc{True}.  Furthermore, since for each variable $x$ the handlebody $V_x'$ is a ball, and by Claim \ref{clm:genusTwoIncompressibleOutside} 2-handles were attached only to solid tori, it follows that every $V_x$ was a solid torus, i.e., exactly one of $x$ and $\neg x$ is \textsc{True}.

Suppose that some clause $C = \lx \vee \l y \vee \l z$ of $\Phi$ is not satisfied. By our assumptions on the formula $\Phi$, the literals $\lx$, $\l y$ and $\l z$ are different, and this clause appears only once in $\Phi$. Consider $B_\lx \cup B_{\l y} \cup B_{\l z}$, the union of 3 M\"obius bands that, because they pass through the diagram for the clause $C$, form the Borromean rings and, in particular, are linked.   But, by Claim \ref{clm:balls}, $B_\lx \subset V_x'$ is contained in a ball that is disjoint from both $B_{\l y}$ and $B_{\l z}$.  This contradicts the fact that they are linked.

We conclude that every clause $C$, hence the total formula $\Phi$, is satisfied.

\end{proof}

\section{$V_x(L_x)$ is boundary irreducible.}
\label{sec:VxLxBoundaryIrreducible}

\newcommand{\rs}{r/s}
\newcommand{\pq}{p/q}

This section is the most technical part of the paper, proving Proposition~\ref{prop:genus2compresses}, which as we saw in the proof of Claim~\ref{clm:bound_i},
is an essential step in showing that $\mathbb{S}^3(L') \ncong \mathbb{S}^3$ unless the surgery coefficients of $L'$ yield a satisfying assignment via the rule $\{1/0 \leftrightarrow \textsc{True},~{\neq}1/0 \leftrightarrow \textsc{False}\}$.

\proptechnical*

Most of the section is a sequence of claims, from which the proof of this proposition will follow.

\begin{figure}[h]
\center{\includegraphics{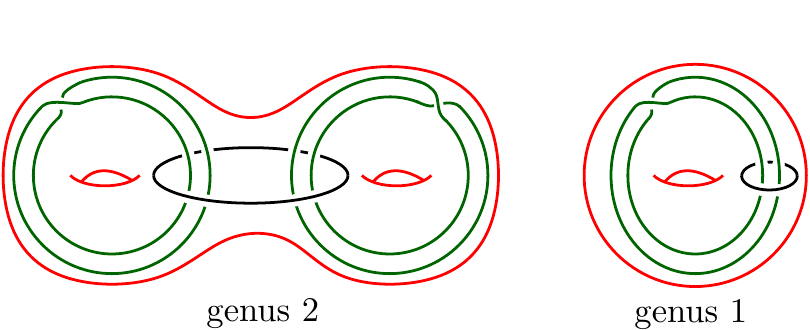}}
\caption{The handlebody $V_x$ with the link $L_x$.}
\label{fig:genus2}
\end{figure}

Recall that  $V_x =  N(B_x^* \cup D_x \cup B_\nx^*)$, where $B_\lx$ is a
M\"obius band bounded by $\kappa_\lx$, $D_x$ is a disk bounded by $\gamma_x$
and  $*$ means that we omit $B_\lx$ from the union if $\kappa_\lx \subset
L_{1/0}$.  Thus $V_x$ is a genus 1 or 2 handlebody that contains a clasp
$\gamma_x$, and 1 or 2 (resp.) literals $\kappa_\lx$; see Figure~\ref{fig:genus2}.

When compared with Figure~\ref{fig:Vx} the handlebody
$V_x$ is drawn here as if it were unknotted. Note that this is purely cosmetic, the difference between
knotted and unknotted is a question of how the handlebody is embedded, whereas
Proposition~\ref{prop:genus2compresses} is a statement about the 3-manifold
itself, irrespective of any embedding. We also observe that we may assume that
$V_x(L_x)$ is unknotted (for arbitrary surgery coefficients on $L_x$). Indeed,
let us consider the switch on the diagram of the variable link depicted at
Figure~\ref{fig:switch}. 
This switch unknots the surrounding handlebody, even
if some Dehn surgery on the link components was already performed.

\begin{figure*}[h]
\center{\includegraphics{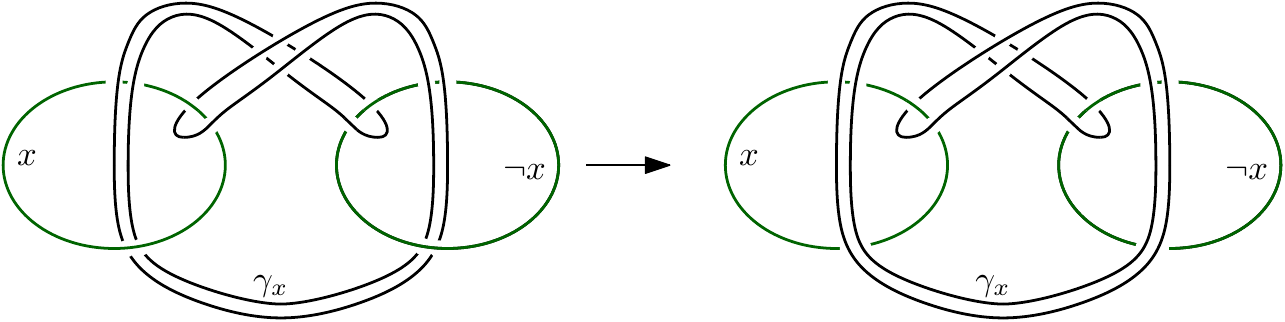}}
  \caption{A switch on the variable diagram that unknots the surrounding
  handlebody.}
\label{fig:switch}
\end{figure*}

We adjust our notation to suppress $x$ and to emphasize slopes over link
components. If $V_x$ is a genus 2 handlebody, we write $V(r_+, s,
r_-)$ for the 3-manifold obtained after surgery on $V_x$ where $r_\pm$ is the
surgery coefficient of $\kappa_\lx$ and $s$ is the surgery coefficient on
the clasp $\gamma_x$. Similarly, if $V_x$ is a solid torus, we write $V(r_\pm,
s)$ for the 3-manifold obtained after surgery on $V_x$ with $r_\pm$ on
$\kappa_\lx$ and $s$ on $\gamma_x$. As usual, a coefficient of
$\emptyset$ means that the component has been drilled out but not filled.  In
this notation  $V_x(L_x)$ is homeomorphic to either $V(r_+, 3/2,
r_-)$ or $V(r_\lx,3/2)$,  where each $r_\pm \neq 1/0$.

If $r$ and $r'$ are slopes in a torus, let $\Delta(r,r')$ denote their {\em
distance}, that is the minimum number of intersections taken over all pairs of
curves that have slopes $r$ and $r'$, respectively. If $r= t/u$
and $r' = v/w$ with respect to some homology basis for the torus, then the
distance is easily computed, $\Delta(r,r') = \Delta(t/u, v/w) = |tw -
uv|$.  Define the distance between a filling and a non-filling to be
$\Delta(r,\emptyset)= \infty$.

\begin{figure*}[h]
\begin{center}
  \includegraphics{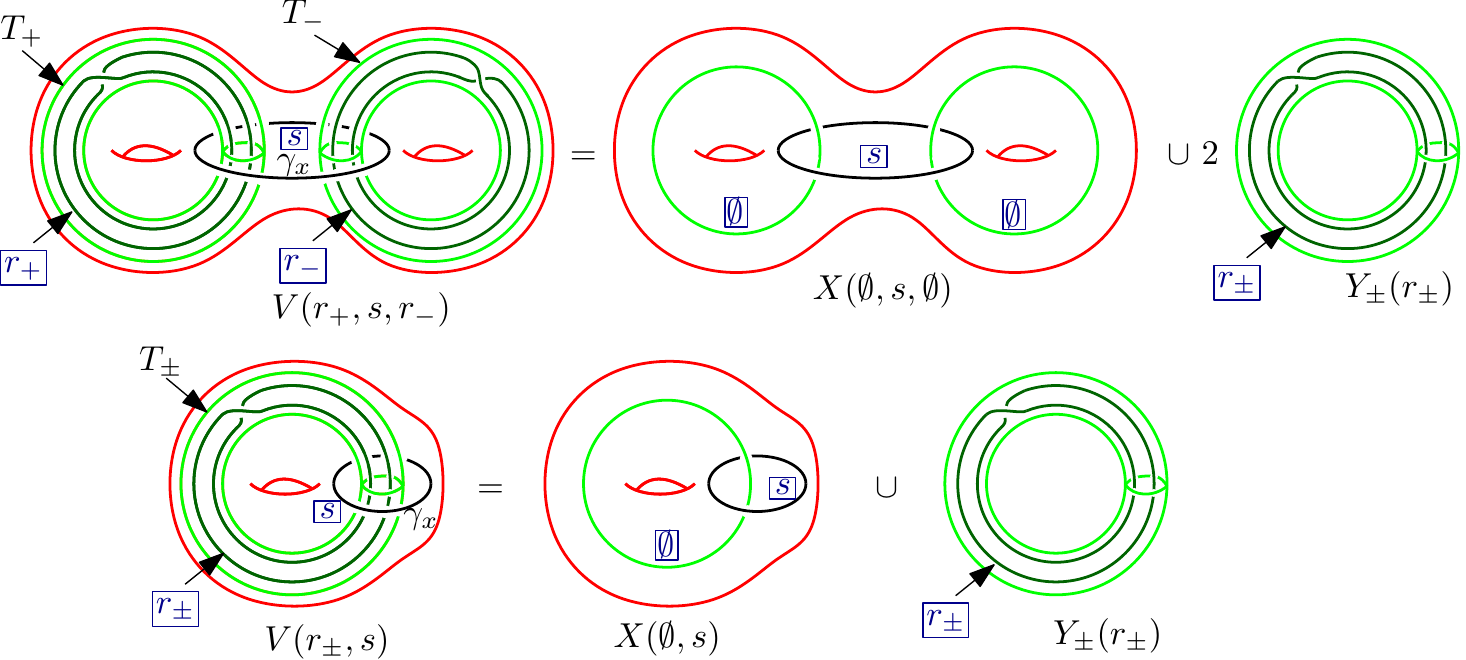}
\caption{$V$ can be cut along tori $T_+$ and $T_-$ to obtain $X$ and one or two copies of $Y_\pm$. }
\label{fig:cutAlongTori}
\end{center}
\end{figure*}

\begin{figure}[h]
\begin{center}
  \includegraphics{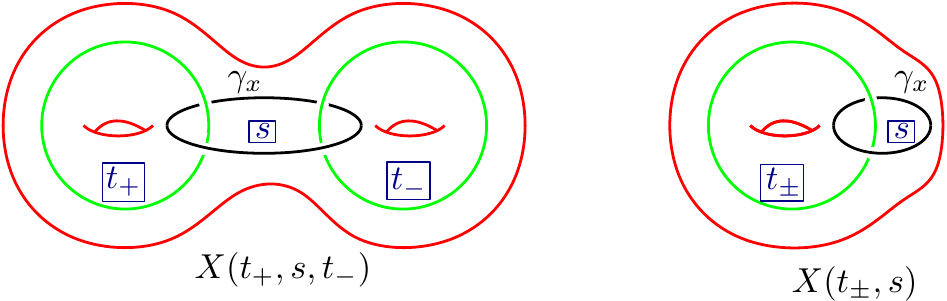}
  \caption{$X(t_+,s,t_-)$ and $X(t_\pm,s)$.}
\label{fig:X}
\end{center}
\end{figure}

Recall that each literal component bounds a M\"obius band $B_\lx$.  The
boundary of a regular neighborhood of the band is a torus $T_\pm = \partial
N(B_\lx) \subset V_x$ that separates each literal knot $\kappa_\lx$ from the clasp and
$\kappa_{\mp x}$ (if present).  Cut $V(r_+, s, r_-)$ along these tori as shown in Figure
\ref{fig:cutAlongTori}. This expresses $V(r_+, s, r_-)$ as a union $V(r_+, s,
r_-)= X(\emptyset, s, \emptyset) \cup Y_+(r_+) \cup Y_-(r_-)$
or $V(r_\pm, s) = X(\emptyset, s) \cup Y_\pm(r_\pm)$; see
Figure~\ref{fig:cutAlongTori}. Here $X(t_+, s, t_-)$ or $X(t_\pm, s)$ is the
manifold obtained by Dehn surgery on the link inside $V_x$ on
Figure~\ref{fig:X} (we keep the clasp $\gamma_x$ but the other knot(s) are
different), and $Y(r_\pm)$ is depicted at Figure~\ref{fig:cutAlongTori}.

\medskip

We first introduce the following two standard lemmas, whose proofs can be found in Schultens~\cite[Section~3.9]{s-it3m-14}.

\begin{\lemm}
\label{lem:compDiskDisjoint}
Suppose that $F \subset M$ is a properly embedded essential surface in a 3-manifold $M$.  If $M$ is reducible then there is a reducing sphere $S$ for $M$ so that $S \cap F = \emptyset$.   If some  component of $\partial M$ has a compressing disk in $M$, then there is a compressing disk $D$ for that boundary component for which  $D \cap F = \emptyset$.
\end{\lemm}

\begin{\lemm}
\label{lem:isotopeEssential}
Let $F$ and $F'$ be properly embedded essential surfaces in an irreducible and boundary irreducible 3-manifold. Then $F$ and $F'$ can be isotoped so that $F \cap F'$ is {\em essential}, that is  each component of the intersection is a curve (loop or arc) that is essential in both $F$ and $F'$.
\end{\lemm}

Then, our first step is to prove that $X(\emptyset, \emptyset)$ and
$X(\emptyset, \emptyset, \emptyset)$ are irreducible and boundary irreducible.

\begin{claim}
\label{lem:SIncompressibleInV}
  $X(\emptyset, \emptyset)$ and $X(\emptyset, \emptyset, \emptyset)$ are irreducible and boundary irreducible.
\end{claim}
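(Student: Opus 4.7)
The plan is to realize $X(\emptyset,\emptyset,\emptyset)$ as the inside piece of a natural handlebody splitting of the exterior of a three-component link of unknots in $\mathbb{S}^{3}$. By the remark following Figure~\ref{fig:switch} we may assume that $V_x$ is unknotted in $\mathbb{S}^{3}$, so its complement $H := \overline{\mathbb{S}^{3} \setminus V_x}$ is also a handlebody. Let $c_\pm$ denote the core of the solid torus $Y_\pm = N(B_\pm)$, so that $c_\pm$ is a core of the M\"obius band $B_\pm$. By construction,
\[
X(\emptyset,\emptyset,\emptyset) \;=\; V_x \setminus \interior N\bigl(\gamma_x \cup c_+ \cup c_-\bigr),
\]
and gluing $H$ back along $\partial V_x$ identifies $X \cup H$ with the exterior $E(G)$ of the link $G := \gamma_x \cup c_+ \cup c_-$ in $\mathbb{S}^{3}$, where $G$ can be read directly off the variable diagram of Figure~\ref{f:variable_full}. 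The case $X(\emptyset,\emptyset)$ is analogous, with a two-component link.

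I would first verify that $E(G)$ is irreducible and boundary irreducible. Since $G$ is an explicit link of unknots given by a concrete diagram, non-splitness can be checked by computing pairwise linking numbers, and one can exhibit an essential surface such as the clasp disk $D_x$ punctured by $c_+$ and $c_-$. Primeness of $G$, hence incompressibility of each torus of $\partial E(G)$, follows similarly by a surface analysis; alternatively a diagrammatic check should show that $G$ is hyperbolic, which yields both properties at once.

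Next I would show that $\partial V_x$ is incompressible in $X$. A compressing disk for $\partial V_x$ in $X$ is an essential disk in the handlebody $V_x$ disjoint from $G$. Here one analyses the meridian disks of $V_x$: the obvious meridians are the co-cores of the bands $B_\pm$, and each meets the corresponding core $c_\pm$ exactly once. A standard innermost-arc analysis, using that $V_x$ is irreducible and that $D_x, B_+, B_-$ are essential surfaces in $V_x$, allows one to isotope an arbitrary essential disk so that it meets $D_x$ and $B_\pm$ only in essential arcs; counting these intersections against the meridional system then forces the disk to meet $G$, a contradiction. This step, in which the specific way $\gamma_x$ clasps around $c_+$ and $c_-$ must be exploited, is the main obstacle.

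With these ingredients in hand, irreducibility and boundary irreducibility of $X$ follow. A reducing sphere $S \subset X$ would remain a reducing sphere of $E(G)$, since the only way for $S$ to bound in $E(G)$ without bounding in $X$ is for it to bound a ball containing $H$, which is impossible as $H$ is a non-trivial handlebody; this contradicts irreducibility of $E(G)$. Similarly, a compressing disk for any torus component of $\partial X$ would produce a compressing disk in $E(G)$ for a boundary torus, also impossible; the remaining case of a compressing disk for the genus-two component $\partial V_x$ was dealt with in the previous paragraph. Lemma~\ref{lem:compDiskDisjoint} permits the rearrangements needed to pass between $X$, $H$, and $E(G)$. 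The argument for $X(\emptyset,\emptyset)$ is identical.
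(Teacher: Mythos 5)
The paper's proof takes a different, more economical route that you should compare with. Like you, it observes that $X(\emptyset,\emptyset,\emptyset)$ is the exterior of a three-component chain in the genus-two handlebody, and it even uses the embedding into a link exterior in $\mathbb{S}^3$ — but \emph{only} to get incompressibility of the three inner torus boundary components, which transfers immediately because a compressing disk in the submanifold $X$ is a compressing disk in the bigger link exterior. For $X(\emptyset,\emptyset)$ it simply identifies the manifold with $\{\textit{pair of pants}\}\times\mathbb{S}^1$. The heavy lifting — irreducibility of $X(\emptyset,\emptyset,\emptyset)$ and incompressibility of the genus-two boundary component $F$ — is then done by one clean device: the pair of pants $P = D_x \cap X$ is shown essential (using incompressibility of the inner tori), Lemma~\ref{lem:compDiskDisjoint} lets one take a reducing sphere or compressing disk disjoint from $P$, and $X \setminus N(P) \cong F\times[0,1]$ is manifestly irreducible and boundary irreducible.

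Your proposal replaces that pair-of-pants cut with the decomposition $E(G) = X \cup_{\partial V_x} H$ and tries to pull everything back from properties of the link exterior $E(G)$. This works one way around for the torus boundary components, exactly as in the paper, but it cannot by itself deliver the key claim that $\partial V_x$ is incompressible in $X$: the genus-two surface $\partial V_x$ is an \emph{interior} surface of $E(G)$, not a boundary component, so nothing about $E(G)$ being irreducible and boundary-irreducible says whether a meridian disk of the handlebody $V_x$ can miss $G$. You flag this yourself as ``the main obstacle,'' and the sketch you give (isotope an arbitrary essential disk to meet $D_x$ and $B_\pm$ in essential arcs and then ``count against the meridional system'') is not a proof; making it precise would be at least as much work as the paper's $P$-cut argument, which handles this and irreducibility simultaneously. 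There is a second, smaller gap: your transfer of irreducibility asserts that a reducing sphere $S\subset X$ cannot bound a ball $B\subset E(G)$ containing $H$, ``as $H$ is a non-trivial handlebody,'' but a genus-two handlebody can sit inside a ball, and in the scenario $H\subset B$ the piece $B\cap X = B\setminus \interior H$ is not a ball — so $E(G)$ irreducible does not directly force $X$ irreducible. One has to rule this configuration out by a separate argument (e.g., looking at the complementary ball in $\mathbb{S}^3$), which you have not supplied. Finally, your treatment of $E(G)$ itself (non-splitness, primeness/hyperbolicity ``by a diagrammatic check'') is asserted rather than argued, though that part is not hard. In short: your high-level strategy is a genuine alternative to the paper's, but two of its three steps are currently incomplete, and the step you concede is hard is exactly the one the paper's pair-of-pants trick is designed to dissolve.
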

\begin{proof}

  First, we observe that $X(\emptyset, \emptyset)$ is homeomorphic to the product $\{\textit{pair of pants}\} \times
  \mathbb{S}^1$ (see Figure~\ref{fig:X}) which is both irreducible and boundary irreducible (see for example Jaco~\cite[Chapter VI]{jacoBook}).

Next, consider the case of $X(\emptyset,\emptyset,\emptyset)$.  It is the exterior of a three component chain embedded in the genus two handlebody as indicated in Figure \ref{fig:cutAlongTori}.  When the handlebody is
  standardly embedded in $\mathbb{S}^3$, as pictured, the chain is a non-trivial link in
  $\mathbb{S}^3$.  It follows that the three inner torus boundary components are
  incompressible in the exterior of that link in $\mathbb{S}^3$, hence they are also
  incompressible in $X(\emptyset,\emptyset,\emptyset)$.

There remains to show that $X(\emptyset,\emptyset,\emptyset)$ is irreducible and that its genus two boundary component $F$ is incompressible in $X(\emptyset,\emptyset,\emptyset)$. For this we will rely on Lemma~\ref{lem:compDiskDisjoint}.

First, recall that the central clasp $\gamma_x$ bounds a disk $D_x \subset V_x$. Then
  $P = D_x \cap X(\emptyset,\emptyset,\emptyset) \subset X(\emptyset,\emptyset,\emptyset)$ is a properly embedded pair of pants; see Figure
  \ref{fig:genus2Surfaces}. Moreover, $P$ is essential:  Compressing  $P$ would
  yield two surfaces, an annulus and a disk, each with essential boundary.  But
  the disk would be a compression for one of the inner torus boundary
  components, contradicting our previous observation that they are
  incompressible in $X(\emptyset,\emptyset,\emptyset)$.

Now, let us assume that there is a reducing sphere for
$X(\emptyset,\emptyset,\emptyset)$ or a compressing disk for $F$. By Lemma
\ref{lem:compDiskDisjoint}, there is a reducing sphere or compressing disk
disjoint from $P$.  But, then that sphere or disk  is a reducing sphere or
compressing disk for $X(\emptyset,\emptyset,\emptyset) \setminus N(P)$.
However, $X(\emptyset,\emptyset,\emptyset) \setminus N(P)$ is homeomorphic to the product $F \times [0,1]$, which is irreducible and boundary irreducible, a contradiction.
\end{proof}

Let $A$ be a properly embedded annulus in an irreducible 3-manifold with incompressible boundary.  Then $A$ is incompressible, for the alternative implies that the 3-manifold's boundary is compressible.   We say that $A$ is {\em peripheral} if it is boundary compressible, {\em spanning} if it meets two distinct boundary components, and {\em cabling} if it is not peripheral and meets only one boundary component.

The following lemma applies well known results on Dehn filling~\cite{cgls,scharlemannDiskSphere}.

\begin{lemma}
\label{lem:IncompressibleIrreducibleNotAProduct}
Let $M$ be an irreducible and boundary irreducible 3-manifold that is not
  homeomorphic to $\{torus\} \times [0,1]$.  Let $A \subset M$ be a spanning
  annulus whose boundary has slope $t$ in a torus boundary component $T \subset
  \partial M$. If $\Delta(s,t) > 1$, then $M(s)$, the 3-manifold obtained by
  performing a Dehn filling on $T$ with slope $s$, is irreducible, boundary irreducible and is not homeomorphic to $\{torus\} \times [0,1]$.
\end{lemma}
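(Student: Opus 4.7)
My proof plan is to establish each of the three conclusions of Lemma~\ref{lem:IncompressibleIrreducibleNotAProduct} (irreducible, boundary irreducible, not $\{\mathit{torus}\} \times [0,1]$) by contradiction, exploiting the essential spanning annulus $A$ together with well-known constraints on exceptional Dehn fillings from \cite{cgls,scharlemannDiskSphere}.

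For irreducibility, I would assume $M(s)$ contains a reducing sphere $S$. Letting $V$ denote the solid torus glued to $T$ in $M(s)$, I isotope $S$ transversely to $\partial V$ so as to minimize $|S \cap \partial V|$; since $M$ is irreducible this intersection is nonempty and consists of meridian disks of $V$, so $P := S \cap M$ is an essential planar surface properly embedded in $M$ with all boundary components of slope $s$ on $T$. Applying Lemma~\ref{lem:isotopeEssential} to $P$ and $A$ (valid since $M$ is irreducible and boundary irreducible), I arrange that $P \cap A$ consists of essential arcs and loops in both surfaces. Now the standard CGLS/Scharlemann-style graph-theoretic argument applies: labeled intersection graphs on $P$ and $A$ record the pattern of $P \cap A$, each arc endpoint on $T$ contributes $\Delta(s,t)$ crossings between the slope-$s$ curves $\partial P \cap T$ and the slope-$t$ curve $\partial A \cap T$, and the hypothesis $\Delta(s,t) \geq 2$ combined with the Euler-characteristic constraints of a planar surface $P$ and an annulus $A$ forces a contradiction. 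For boundary irreducibility I would rerun the same argument with $S$ replaced by a compressing disk $D$ for some component of $\partial M(s)$; the resulting surface $P = D \cap M$ again meets $A$ in an overdetermined pattern when $\Delta(s,t) \geq 2$, and the same counting rules it out.

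For the final conclusion, I would suppose $M(s) \cong \{\mathit{torus}\} \times [0,1]$ and analyze how the filling solid torus $V$ sits inside this product. Since $M$ is not itself a product, $V$ cannot be parallel into $\partial M(s)$ in the obvious way; a careful examination of the Seifert-type structure of $(\{\mathit{torus}\} \times [0,1]) \setminus V$ shows that for $M = M(s) \setminus V$ to admit an essential spanning annulus $A$ whose $T$-boundary has slope $t$, the filling slope $s$ must coincide with (or lie at distance at most one from) a slope determined by the product fibers, forcing $\Delta(s,t) \leq 1$ and contradicting $\Delta(s,t) > 1$.

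The main obstacle will be the combinatorial intersection-graph arguments in the first paragraph. These arguments, originating in CGLS, are notoriously intricate but essentially formulaic: the rigid structure of a planar $P$, together with the restricted combinatorics of essential arcs on the annulus $A$ and the lower bound $\Delta(s,t) \geq 2$, is precisely what produces the contradiction, and the hypothesis that $M$ is not $\{\mathit{torus}\} \times [0,1]$ is what prevents degenerate cases where $A$ could be horizontal.
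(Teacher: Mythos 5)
Your proposal takes a genuinely different route from the paper's. The paper's proof does not run the intersection-graph machinery at all: boundary irreducibility is obtained by directly citing CGLS Theorems~2.4.3 and~2.4.5, using the fact that the spanning annulus $A$ makes $S$ compressible in $M(t)$; irreducibility is obtained by first showing (via Lemma~\ref{lem:isotopeEssential} applied to $A$ and a putative cabling annulus, and noting that arcs with both endpoints on the $T$-side of a \emph{spanning} annulus are boundary-parallel) that every cabling slope on $T$ equals $t$, and then invoking Scharlemann's trichotomy for Dehn fillings of a manifold whose boundary is compressible after one filling; and the non-product conclusion is a concrete topological computation, not a Seifert-fibered analysis. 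You propose instead to re-derive the first two conclusions from scratch by intersecting $A$ with an essential planar surface $P$ cut out by a reducing sphere or compressing disk.

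This alternative is plausible in outline, but I see two concrete gaps. First, your sketch claims the contradiction appears from $\Delta(s,t)\geq 2$ plus Euler-characteristic counting, but the \emph{naive} version of your argument --- once $P$ is essential with all its boundary on $T$, Lemma~\ref{lem:isotopeEssential} forbids any arc of $P\cap A$ since such an arc would have both endpoints on the single circle $\partial A\cap T$ and hence be inessential in the spanning annulus $A$ --- would already rule out $\Delta(s,t)\geq 1$. That is strictly stronger than what CGLS and Scharlemann prove, so it cannot be right without further care: the place where $\Delta(s,t)=1$ sneaks back in is precisely the degenerate cases (e.g.\ $P$ boundary-parallel, or the exceptional families in CGLS~2.4.3/2.4.5 and Scharlemann's theorem), and your plan does not address why $P$ can be taken essential rather than boundary-parallel, nor does it account for those exceptional configurations. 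You would in effect be reproducing sizeable chunks of \cite{cgls} and \cite{scharlemannDiskSphere}, which is a much larger task than ``essentially formulaic'' suggests. Second, your treatment of the non-product conclusion is a gesture rather than an argument. The paper does something explicit: it writes $M(s)=A_r\times\mathbb{S}^1$, performs two more Dehn fillings to exhibit $\mathbb{S}^3$ as the union of two solid tori meeting along $T$ with meridional slopes $s$ and $t$, and then uses the lens-space homology computation (Example~\ref{example:lensspace}) to force $\Delta(s,t)=1$. You should give a comparably explicit contradiction rather than appealing to ``a careful examination of the Seifert-type structure.''
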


\begin{proof}
Let $S$ be the other boundary component met by the spanning annulus $A$.   Note
  that the surface $S$ is compressible in $M(t)$ (and possibly for slopes that
  meet $t$ once). But since $M$ is not homeomorphic to $ \{ torus \} \times
  [0,1]$ and $\Delta(s,t) > 1$ we can apply a theorem of Culler, Gordon,
  Luecke and Shalen~\cite[Theorem 2.4.3]{cgls} to conclude that $S$  is
  incompressible in $M(s)$.   Similarly, Theorem 2.4.5 of the same
  article~\cite{cgls} implies that when $\Delta(s,t) > 1$ every other boundary
  component $S'$ is incompressible in $M(s)$.

Next we show that $M(s)$ is irreducible.   Note first that any cabling
  annulus $A'$ for $M$ meeting $T$ has slope $t$.  For Lemma \ref{lem:isotopeEssential}
  implies that $A$ and $A'$ can be isotoped to intersect essentially.   Because
  the boundary curves of $A'$ are in $T$, any arc component $A \cap A'$ has
  both endpoints in $T$. But then, such an arc is inessential in $A$.  Thus
  there are no intersection arcs and both annuli have slope $t$.  We are now in
  a position to apply a theorem of Scharlemann~\cite{scharlemannDiskSphere}
  (with $M = M(t), M' = M(s))$, which shows that when  $\partial M(t)$  is
  compressible then either $M(s)$ is a solid torus, $s$ is the slope
  of a cabling annulus for $T$, or $M(s)$ is irreducible.    But, we have
  proved that $M(s)$ has incompressible boundary, so it is not a solid
  torus, and the only possible slope for a cabling annulus is $t$, and thus
  since $\Delta(s,t) > 1$, $s$ is not a cabling annulus.   It follows that $M(s)$ is irreducible as claimed.

Finally assume, by way of contradiction, that $M(s)$ is  $\{torus\} \times
  [0,1]$.    This implies that $M$ has three torus boundary components $S,T$
  and another torus, call it $S'$.   Let $r$ be the slope of $A$ on $S$.
  We may write $M(s) = \{torus\} \times [0,1] = A_r \times \mathbb{S}^1$, a
  product where $A_r$ is an annulus with slope $r$ (actually for any
  slope).  Perform a second Dehn filling with slope $r$ on $S$.  The
  attached solid torus has a product structure $D_r \times \mathbb{S}^1$, where
  $D_r$ has slope $r$ in $T$, and so after the attachment we have
  that the filled 3-manifold is a solid torus with product structure $Z =
  (D_r \cup A_r) \times \mathbb{S}^1$. Then perform a third filling on $S'$
  along any slope with meets the meridional disk $D_r \cup A_r$ once.
  This produces  $\mathbb{S}^3$.  Now consider the torus $T \subset \mathbb{S}^3$.   On the
  attached solid torus side, there is a compressing disk for $T$ with slope
  $s$.  But, $A \cup D$, where $D$ is a meridional disk for the torus
  attached to $S$, is a compressing disk for the other side with slope $t$.
  Thus we have expressed $\mathbb{S}^3$ as a lens space, the union of two solid tori
  glued along the torus $T$.   As discussed in Example~\ref{example:lensspace},
  the homology of this 3-manifold is $\mathbb Z_q$, where $q$ is the intersection
  number in $T$ of the boundaries, $s$ and $t$, of the meridional disks.
  Since $\mathbb{S}^3$ is a homology sphere we have  $q = \Delta(s,t)=1$, contradicting
  our assumption that this quantity is at least 2. \end{proof}

Repeated application of the above lemma yields the following conclusion.  We
emphasize that it covers the possibility that  $t_\pm = \emptyset$: just don't perform all the fillings.

\begin{claim}
\label{lem:FCompressesInXT} If $\Delta(t_\pm,1/0) > 1$, then $X(t_\pm,3/2)$ is boundary irreducible.  If $\Delta(t_+,1/0)>1$ and $\Delta(t_-,1/0)>1$, then $X(t_+,3/2,t_-)$ is boundary irreducible.
\end{claim}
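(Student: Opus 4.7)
The plan is to prove both parts of the claim together by iteratively applying Lemma~\ref{lem:IncompressibleIrreducibleNotAProduct} to fill one torus at a time, starting from $X(\emptyset,\emptyset)$ in the solid-torus case and $X(\emptyset,\emptyset,\emptyset)$ in the genus-$2$ case. By Claim~\ref{lem:SIncompressibleInV} these are both irreducible and boundary irreducible, and neither is homeomorphic to $\{torus\} \times [0,1]$: the first is $\{\text{pair of pants}\} \times \mathbb{S}^{1}$, with three torus boundary components, while the second has a genus-$2$ boundary component.

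The crux of the argument is to exhibit, for each torus to be filled, an essential spanning annulus whose boundary slope on that torus is sufficiently far from the filling coefficient in the sense required by the lemma. I will produce: an essential spanning annulus $A_\gamma$ with boundary on $\partial N(\gamma_x)$ of slope $0/1$ (the longitude of $\gamma_x$), which satisfies $\Delta(3/2, 0/1) = 3 > 1$; and, for each sign, an essential spanning annulus $A_\pm$ with boundary on $T_\pm$ of slope $1/0$ (the meridian of the core $\kappa'_\pm$ of $N(B_\pm)$), which matches the hypothesis $\Delta(t_\pm, 1/0) > 1$. Crucially, each $A_\pm$ will be chosen disjoint from $\partial N(\gamma_x)$, so that it persists as a spanning annulus after the clasp has been filled.

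The slope identification uses the pair of pants $P = D_x \cap X$ from the proof of Claim~\ref{lem:SIncompressibleInV} (which degenerates to an annulus in the solid-torus case): its boundary on $\partial N(\gamma_x)$ is a push-off of $\partial D_x = \gamma_x$ and hence has slope $0/1$, while its boundary on each $T_\pm$ bounds the disk $D_x \cap N(B_\pm)$, which is a meridian disk of the solid torus $N(B_\pm) = N(\kappa'_\pm)$ (since $\lk(\gamma_x, \kappa'_\pm) = \pm 1$ forces $D_x$ to meet $\kappa'_\pm$ once, making this disk nontrivial in $N(\kappa'_\pm)$), and hence has slope $1/0$. In the solid-torus case, $P$ itself is a spanning annulus between $\partial N(\gamma_x)$ and $T_\pm$ that serves as $A_\gamma$, and a second spanning annulus from $T_\pm$ to $\partial V_x$ coming from the Seifert fibration of $X(\emptyset,\emptyset) = \{\text{pair of pants}\} \times \mathbb{S}^{1}$ serves as $A_\pm$. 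For the genus-$2$ case I use the product decomposition $X(\emptyset,\emptyset,\emptyset) \setminus N(P) \cong F \times [0,1]$ also from Claim~\ref{lem:SIncompressibleInV}: for any essential loop $\ell$ in the annular piece $T \cap F'$ (where $F' = F \times \{1\}$ and $T \in \{\partial N(\gamma_x), T_+, T_-\}$), the annulus $\ell \times [0,1]$ is an essential spanning annulus in $X(\emptyset,\emptyset,\emptyset)$ from $F$ to $T$ whose slope on $T$ equals that of the corresponding boundary of $P$, producing both $A_\gamma$ and $A_\pm$ simultaneously (and mutually disjoint, since the torus components are).

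With these annuli in hand, we apply Lemma~\ref{lem:IncompressibleIrreducibleNotAProduct} first to fill $\partial N(\gamma_x)$ with coefficient $3/2$ via $A_\gamma$, producing a manifold that is still irreducible, boundary irreducible, and not homeomorphic to $\{torus\} \times [0,1]$; then we apply the lemma once more in the solid-torus case, or twice more in the genus-$2$ case, to fill each $T_\pm$ with coefficient $t_\pm$ using the surviving $A_\pm$. At each stage the hypothesis $\Delta(\cdot,\cdot) > 1$ of the lemma is exactly the one supplied by the preceding slope computation, and $X(t_\pm,3/2)$ (respectively $X(t_+,3/2,t_-)$) emerges boundary irreducible as claimed. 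The main technical obstacle is precisely this slope identification on each torus, which requires a careful geometric analysis of how the surfaces $D_x$ and $B_\pm$ sit inside the handlebody $V_x$.
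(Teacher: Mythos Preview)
Your proposal is correct and follows essentially the same route as the paper: start from $X(\emptyset,\emptyset)$ or $X(\emptyset,\emptyset,\emptyset)$, which are irreducible and boundary irreducible by Claim~\ref{lem:SIncompressibleInV} and clearly not $\{torus\}\times[0,1]$, and then iterate Lemma~\ref{lem:IncompressibleIrreducibleNotAProduct} using spanning annuli that meet the clasp torus in slope $0/1$ and each $T_\pm$ in slope $1/0$. The paper simply exhibits these annuli by reference to Figure~\ref{fig:genus2Surfaces} (all taken from the outer boundary component to the relevant inner torus), whereas you give an explicit construction via the pair of pants $P$ and the product structure on $X(\emptyset,\emptyset,\emptyset)\setminus N(P)$; both yield annuli with the same slopes and serve the same purpose. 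Incidentally, your computation $\Delta(3/2,0/1)=3$ is the correct value (the paper writes $2$, a harmless slip since either exceeds $1$).
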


\begin{figure}[h]
\center{
  \includegraphics{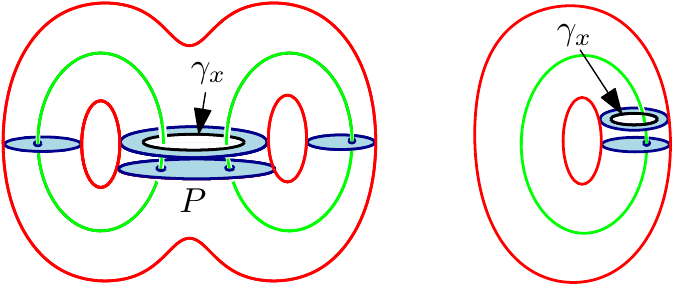}
}
  \center{
}
\caption{Annuli in $X$, and a pair of pants $P$ in $X(\emptyset,\emptyset,\emptyset)$.}
\label{fig:genus2Surfaces}
\end{figure}

\begin{proof}
  We argue both cases at once. Observe that both $X(\emptyset, \emptyset)$ and
  $X(\emptyset, \emptyset, \emptyset)$ have three torus boundary components
  each, so neither is
homeomorphic to $\{torus\} \times [0,1]$.
Claim \ref{lem:SIncompressibleInV}
shows that both types are irreducible and boundary irreducible, so suitable for
application of Lemma~\ref{lem:IncompressibleIrreducibleNotAProduct}.  In both
cases, there is a spanning annulus meeting the outer boundary component that
meets the clasp component in slope $0/1$; see Figure \ref{fig:genus2Surfaces}.
Moreover, for any inner torus boundary component $T_\pm \subset \partial X$,
there is a spanning annulus from the outer boundary that has slope $1/0$ on
$T_\pm$.   Since, $\Delta(3/2,0/1)=2>1$ and, by assumption, $\Delta(t_\pm,1/0)>1$, we can use
the prior lemma two or three times in (any) sequence to conclude that the
filled 3-manifold is boundary irreducible.

\end{proof}

We now turn our attention to $Y_\pm(r_\pm)$.  Fortunately, $Y_\pm(\emptyset)$
is a {\em cable space}, a Seifert fibered space over the annulus with one
exceptional fiber, and we have a complete understanding of when $T_\pm$, its
outer torus boundary component, is compressible in $Y_\pm(r_\pm)$. This follows
directly from a lemma of Gordon and Litherland~\cite{gordon-litherland} that
classifies essential planar surfaces in a cable space.

\begin{claim}
\label{lem:T+Compresses}
If $T_\pm$ is compressible in $Y_\pm(r_\pm)$ then one of the following holds:
\begin{enumerate}
\item $r_\pm = 2/1$ and $t_\pm = 1/2$, or,
\item $r_\pm = \frac {1+2k}{k}$ and $t_\pm = \frac {1+2k}{4k}$, for some $k \in \mathbb Z$,
\end{enumerate}
where $t_\pm$ is the slope of the curve in $T_\pm$ that bounds a disk in
  $Y_\pm(r_\pm)$.
\end{claim}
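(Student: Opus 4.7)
The plan is to reduce the claim to an application of Gordon and Litherland's classification of essential planar surfaces in a cable space~\cite{gordon-litherland}. The first step is to convert the hypothesis that $T_\pm$ compresses in the filled manifold $Y_\pm(r_\pm)$ into the existence of a suitable essential planar surface $P$ in the unfilled cable space $Y_\pm(\emptyset)$, and the second step is to read off the two allowed slope pairs from that classification.

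Setup. The manifold $Y_\pm(\emptyset)$ is the exterior in the solid torus $N(B_\pm)$ of the $(2,1)$-cable $\kappa_\pm=\partial B_\pm$ of the core of $N(B_\pm)$; hence it is the standard $(2,1)$-cable space, that is, a Seifert fibered space over the annulus with one exceptional fiber of multiplicity $2$. Its two torus boundary components are the outer $T_\pm = \partial N(B_\pm)$ and the inner $\partial N(\kappa_\pm)$. Under the $\mathbb{S}^3$ meridian/longitude basis on $\partial N(\kappa_\pm)$, a regular Seifert fiber realises the cabling slope $2/1$.

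Reduction. Suppose $T_\pm$ compresses in $Y_\pm(r_\pm)$ via a disk $D$ with $\partial D$ of slope $t_\pm$. Write $Y_\pm(r_\pm) = Y_\pm(\emptyset) \cup V$, where $V$ is the filling solid torus with meridian disk $\mu$, put $D$ in general position with $\partial V$, and minimize $|D \cap \partial V|$ by isotopy of $D$. If this intersection is empty, then $D$ would compress $T_\pm$ inside $Y_\pm(\emptyset)$, which is impossible because the boundary of a cable space is incompressible. Otherwise, a standard innermost-circle argument, using that $Y_\pm(\emptyset)$ is irreducible and boundary irreducible, shows that every component of $D \cap V$ is a meridian disk of $V$ and that $P := \overline{D \setminus V}$ is an essential planar surface in $Y_\pm(\emptyset)$ with one boundary circle of slope $t_\pm$ on $T_\pm$ and $n \geq 1$ boundary circles on the inner torus, each of slope $r_\pm$.

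Classification. Gordon and Litherland's lemma classifies essential planar surfaces in a cable space in terms of the slopes of their boundary components. Applied to the $(2,1)$-cable space, the pairs $(r_\pm, t_\pm)$ admitting such a $P$ with a single boundary on $T_\pm$ fall into exactly two families: (i) \emph{vertical} surfaces built from $n$ parallel copies of the cabling annulus capped off by one meridian disk of the outer handlebody $N(B_\pm)$, which forces $r_\pm = 2/1$ and $t_\pm = 1/2$; and (ii) a one-parameter family of surfaces \emph{transverse} to the Seifert fibration, whose slopes, after translating from the Seifert basis to the $\mathbb{S}^3$ meridian/longitude basis via the $(2,1)$-cabling data, are $r_\pm = (1+2k)/k$ and $t_\pm = (1+2k)/(4k)$ for $k \in \mathbb{Z}$. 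The main obstacle is the change-of-basis computation in family (ii): one must verify that the linear transformation induced by the $(2,1)$-cabling yields exactly the parametrisation $\bigl((1+2k)/k,\,(1+2k)/(4k)\bigr)$ rather than some reparametrisation, and check that the family (i) solution does not also fit this progression.
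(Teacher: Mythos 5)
Your proof follows exactly the same route as the paper: convert compressibility of $T_\pm$ after filling into an essential planar surface in the unfilled cable space $Y_\pm(\emptyset)$, and then apply Gordon--Litherland's classification (their Lemma~3.1, with $p=1$, $q=2$) to read off the two allowed slope families. The paper is terser on the reduction step and, like you, leaves the change-of-basis computation to the reader, simply asserting that Gordon--Litherland's cases~(3) and~(4) correspond directly to conclusions~(1) and~(2) of the claim.
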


\begin{proof}

If the outer boundary component does compress in $Y_\pm(r_\pm)$, then there will be an essential  planar surface in $Y_\pm(\emptyset)$ meeting the outer boundary component $T_\pm$ in a
  single closed curve, let $t_\pm$ denote its slope; and, some number of curves
  on the inner torus boundary component, let $r_\pm$ denote their slope.
  Gordon and Litherland classify essential planar surfaces in Lemma 3.1 of
  \cite{gordon-litherland} (using $p=1, q=2$ to apply their result).  The only essential planar surfaces that meet the outer boundary once and inner boundary at least once, are their cases (3) and (4) which correspond directly to conclusions (1) and (2), respectively.
\end{proof}

Our final claim shows that if $F$ compresses, then there is a compressing disk which respects the decomposition along the tori $T_\pm$.

\begin{claim}
\label{lem:meetsInDisks}
Suppose that $V(r_+,s,r_-)$ or $V(r_\pm,s)$ is boundary reducible.
  Then there is a boundary reducing disk $D$ so that $D \cap Y_+(r_+)$ and $D
  \cap Y_-(r_-)$ are each a (possibly empty) union of compressing disks for $T_+$ and $T_-$, respectively.
\end{claim}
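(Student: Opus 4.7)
The plan is to start with an arbitrary boundary reducing disk $D$ for $V(r_+, s, r_-)$ (analogously for $V(r_\pm, s)$), isotope it transverse to $T_+ \cup T_-$, and modify it via two successive reductions. Since $\partial D \subset \partial V$ is disjoint from $T_+ \cup T_-$, the intersection $D \cap (T_+ \cup T_-)$ consists of disjoint simple closed curves in the interior of $D$, and I would first choose $D$ to minimize the total number of such curves among all boundary reducing disks (under a suitable complexity ordering).

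The first reduction would be a standard innermost disk argument showing that every curve of $D \cap (T_+ \cup T_-)$ is essential in its ambient torus. An innermost intersection curve $c$ in $D$ bounds a subdisk $D_0 \subset D$ lying entirely in one piece among $X(\emptyset, s, \emptyset)$, $Y_+(r_+)$, and $Y_-(r_-)$; if $c$ were inessential in $T_\pm$, it would also bound a disk $E \subset T_\pm$, and the sphere $D_0 \cup E$ would bound a ball by irreducibility of the containing piece, across which we could push $D_0$ to reduce $|D \cap (T_+ \cup T_-)|$, contradicting minimality. Justifying irreducibility is a non-trivial sub-task: for $X(\emptyset, s, \emptyset)$ it follows from Claim \ref{lem:SIncompressibleInV} together with a standard Dehn filling argument (the $3/2$ surgery on the clasp preserves irreducibility), while for $Y_\pm(r_\pm)$ it follows from the cable space structure used in the proof of Claim \ref{lem:T+Compresses}, apart from the exceptional slopes classified there which must be treated separately. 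After this step, since disjoint essential simple closed curves on a torus are mutually parallel, all curves of $D \cap T_\pm$ share a common slope $t_\pm$ on $T_\pm$.

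The second reduction would show that each component of $D \cap Y_\pm(r_\pm)$ is a disk, hence a compressing disk for $T_\pm$. Each such component is an essential planar surface with boundary consisting of parallel essential curves on $T_\pm$; if some component $P$ had two or more boundary circles, I would pick two adjacent ones $c_1, c_2$ cobounding an annulus $E \subset T_\pm$ with $E \cap D = c_1 \cup c_2$ and perform a cut-and-paste of $D$ along $E$, producing a new boundary reducing disk with strictly fewer intersection curves. The main obstacle lies in this second reduction: one must ensure that the cut-and-paste along $E$ yields a genuine boundary reducing disk rather than a disconnected surface or an inessential disk, which requires a careful analysis of the tree structure of regions of $D \setminus (T_+ \cup T_-)$ and of how adjacent parallel curves are paired by the components of $D \cap X$ on the opposite side of $T_\pm$. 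The exceptional cases from Claim \ref{lem:T+Compresses} again require separate treatment, but in those cases the planar surface structure in $Y_\pm(r_\pm)$ is sufficiently explicit to permit a direct verification.
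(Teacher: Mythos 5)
Your overall plan (minimize intersections with $T_+ \cup T_-$, show all intersection curves are essential, then show each component of $D \cap Y_\pm$ is a disk) matches the paper's, but both of your two reductions contain gaps, and the paper's devices for closing them are worth understanding. In the first reduction you take the curve $c$ innermost \emph{in $D$} and use irreducibility of the piece containing $D_0$ to push across a ball. This is the wrong innermost disk: the disk $E \subset T_\pm$ bounded by $c$ need not have interior disjoint from $D$, so the ball $B$ bounded by $D_0 \cup E$ may contain parts of $D \setminus D_0$, and pushing $D_0$ across $B$ then fails to produce an embedded disk (or produces one with \emph{more} torus intersections, not fewer). The paper instead takes $a$ innermost \emph{in $T_\pm$} among the inessential curves; this guarantees $E_a \cap D = \partial E_a$, so replacing $D_a$ (the subdisk of $D$ bounded by $a$) with a push-off of $E_a$ is an honest surgery that strictly lowers $|D \cap (T_+ \cup T_-)|$. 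A valuable byproduct is that no irreducibility of $X(\ldots)$ or $Y_\pm(r_\pm)$ is used at all, so the ``exceptional slopes'' you flag as requiring separate treatment simply never arise.

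For the second reduction you correctly identify the main obstacle --- the cut-and-paste along an annulus in $T_\pm$ may destroy the disk or make it inessential --- but you do not resolve it, and indeed bookkeeping over ``adjacent boundary circles of one component $P$'' is awkward. The paper resolves this with a refined complexity: for each intersection curve $b$ it sets $\wt(b) = |\interior(D_b) \cap (T_+ \cup T_-)|$ where $D_b \subset D$ is the subdisk bounded by $b$, and shows (using the already-minimized $\wt(D)$) that all curves meeting a given torus have \emph{equal} weight, by replacing $D_c$ with a push-off of $A \cup D_b$ whenever $\wt(b) < \wt(c)$ across an annulus $A$ of $T_\pm$. Once weights are constant on each torus, any non-disk component $P$ of $D \cap Y_\pm(r_\pm)$ is immediately impossible: its outermost boundary curve would have strictly larger weight than any of its inner ones. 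This is precisely the ``tree structure'' analysis you gesture at, but the weight function packages it cleanly and avoids having to argue that a particular cut-and-paste produces an essential disk. If you redo your argument with these two changes --- innermost in $T$ rather than in $D$, and the subdisk weight function --- it will close both gaps and match the paper's proof.
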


\begin{proof}

Let $D$ be a disk that is transverse to both $T_+$ and $T_-$.   Define the weight of $D$ to be the number of components of intersection with these tori, $wt(D) = |D \cap (T_+ \cup T_-)|$.   If $b$ is a (curve) component of $D \cap (T_+ \cup T_-)$, then $b$ is a loop that bounds a sub-disk $D_b \subset D$.  Define the weight of $b$ to be the weight of the interior of this disk,   i.e. $wt(b) = wt(int(D_b)) = | int(D_b) \cap (T_+ \cup T_-)|$.

Over all compressing disks for $F$ in  $V(r_+,s,r_-)$ that are transverse  to $T_+$ and $T_-$, let $D$ be one that minimizes the weight $wt(D)$.  We will show that $D$ meets $Y_+(r_+)$ and $Y_-(r_-)$ as claimed, only in compressing disks.

First note that every component of $D \cap (T_+ \cup T_-)$ is a curve that is essential in one of the tori,  $T_+$ or $T_-$. For otherwise, among intersection loops that are inessential in, say, $T_+$, we could choose one, $a$,  that is innermost in \(T_{+}\) (recall Footnote~\ref{footnote:innermost}).  That is, $a$ bounds a disk in $T_+$ that is disjoint from $D$.   Then we can modify $D$ by replacing $D_a$,  the disk $a$ bounds in $D$, with the disk $a$ bounds in $T_+$.  But then, a slight isotopy makes the new disk transverse to $T_+$, eliminates the intersection curve $a$, and reduces the weight $wt(D)$  by at least 1, a contradiction.

We now show that for each torus, all intersection curves of $D$ with that torus
have the same weight:  Suppose that there are intersection curves of $D$ with, say, $T_+$ of differing weights.  Then, because the intersection curves are all parallel in $T_+$, they cut $T_+$ into annuli; and one of those annuli, call it $A$, has boundary curves $b$ and $c$ that are intersection curves with different weights, say  $wt(b) < wt(c)$.   Let $D_b$ and $D_c$ be the respective sub-disks of $D$ that they bound.   Form a new disk $D'$ with the same boundary as $D$ by replacing $D_c$ with  (a slight push-off of) $A \cup D_b$.  But this eliminates $wt(c)-wt(b)>0$ intersections (and one more in the case that  $D_b$ and $D_c$ start on opposite sides of $T_+$) and contradicts our assumption that $wt(D)$ was minimized.

Finally, note that constant weights on each torus implies that every component
  of $D \cap Y_+(r_+) \cup  Y_-(r_-)$ must be a (compressing) disk.  Any non-disk component is a planar surface $P \subset D$, whose boundary components all lie in either $T_+$ or $T_-$.  But the weight of its outer boundary component strictly exceeds the weight of each of  its inner boundary components, contradicting the fact that the weights for all curves in that torus are equal.  The conclusion of the lemma follows.

\end{proof}

With the above lemmas, we are able to give the proof of Proposition \ref{prop:genus2compresses}.
\begin{proof}[Proof of Proposition \ref{prop:genus2compresses}]

We prove that $V_x(L_x)$ is boundary irreducible.  Recall that $V_x(L_x) =
  V(r_+,3/2,r_-)$ or $V_x(L_x) = V(r_\pm,3/2)$ where $r_\pm \neq 1/0$.

By way of contradiction, assume that $V_x(L_x)$ is boundary reducible.
  Applying Claim \ref{lem:meetsInDisks}, we may find a compressing disk $D$ for
  $\partial V_x(L_x)$ so that $D \cap (Y_+(r_+) \cup Y_-(r_-))$ is a collection
  of compressing disks.  This collection cannot be empty for this would
  imply that $X(\emptyset,3/2)$ or $X(\emptyset,3/2,\emptyset)$ is boundary
  reducible, contradicting Claim \ref{lem:FCompressesInXT}.  So then $D \cap X$
  is  a {\em punctured disk}, that is a planar surface $P$ with one boundary
  component in $\partial V_x(L_x)$ and all others in $T_+$ or $T_-$.
  (Here $X$ stands for either $X(\emptyset,3/2)$ or
  $X(\emptyset,3/2,\emptyset)$.)

First suppose that $P$ meets only one of the tori, say $T_+$, and  in slope
  $t_+$.   This implies that $\partial V_x(L_x)$ compresses in $X(t_+,3/2)$ or
  $X(t_+,3/2,\emptyset)$ and by Claim \ref{lem:FCompressesInXT} we conclude
  that  $\Delta(t_+,1/0) \leq 1$.  Since $D$ meets $Y_+(r_+)$ in compressing
  disks, we can apply Claim \ref{lem:T+Compresses}.  The only available slope satisfying
  $\Delta(t_+,1/0) \leq 1$ is $t_+ = 1/0$.  But this implies that $r_+ = 1/0$, a contradiction.

Finally, consider the case that $P$ meets both tori, $T_+$ in slope $t_+$ and
  $T_-$ in slope $t_-$.  Again, applying Claim \ref{lem:FCompressesInXT} we
  find that for one slope, say $t_+$, we have $\Delta(t_+,1/0) \leq 1$.  But as
  before, we can also apply Claim \ref{lem:T+Compresses} on $Y_+(r_+)$ to
  conclude that $t_+ = 1/0$ and $r_+ = 1/0$, a contradiction.

\end{proof}


\section{Triangulating $\mathbb{S}^3(L)$}

In this section we show that a triangulation of $M = \mathbb{S}^3(L)$ can be computed efficiently.  The results in this section are known (see, in particular,  Hass, Lagarias and Pippenger~\cite[Lemmas~7.1 and~7.2]{hlp-ccklp-99}).  However, since they are very useful we decided to include a complete discussion, a little more general than is needed for our work.

In Proposition~\ref{P:TriangulatingE(L)} we calculate efficiently a triangulation of a link exterior, so that the meridian of each component embed in the \(1\)-skeleton of the triangulation induced on that boundary.\footnote{By the \em triangulation induced on the boundary \em we mean the restriction of the given triangulation to boundary.}  However, the longitude may not embed in that triangulation.  This forces us to discuss another slope, call the \em blackboard framing\em, which we now define:

\begin{definition}[blackboard framing] Let \(k \subset \mathbb{S}^{3}\) be a
  knot with a given diagram \(D \subset S\) (throughout this section \(S
  \subset \mathbb{S}^{3}\) is a $2$-sphere).  The \em blackboard framing \em of
  \(k\) is the slope on \(\partial N(k)\) represented by the simple closed curve that
  is parallel to \(k\) in \(S\), that is, obtained by pushing \(k\) to one side
  on \(S\);
  see Figure~\ref{f:bf_longitude}.
\begin{figure}[h!]
\begin{center}
\includegraphics{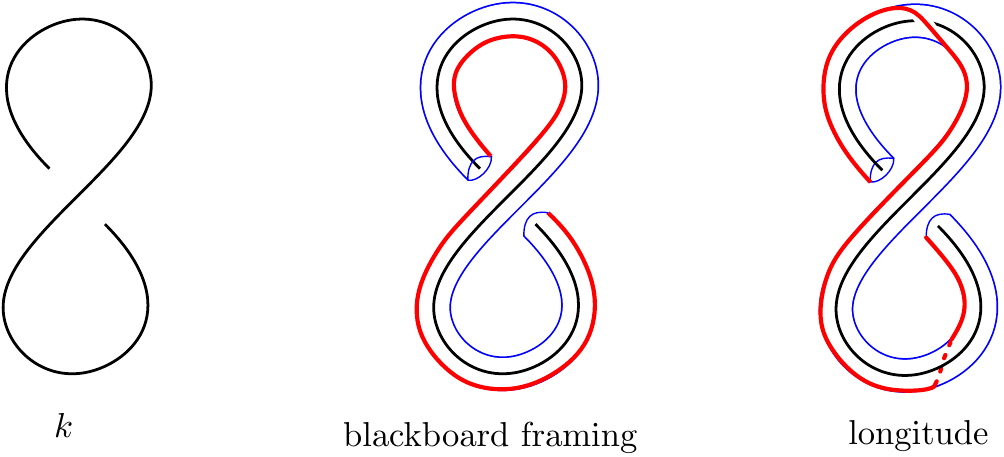}
\end{center}
  \caption{A diagram of a knot $k$ (actually the unknot) and the corresponding
  blackboard framing and the longitude in $\partial N(k)$. The blackboard framing may link with
  $k$ whereas the longitude is never linked with $k$.}
  \label{f:bf_longitude}
\end{figure}
\end{definition}

\begin{remark}
\label{remark:BBframing}
A few comments are necessary before we proceed:
	\begin{enumerate}
	\item The curve defining the blackboard framing intersects a meridional curve once and thus
	  the blackboard framing is an integral slope, say \(n/1\).  It is not, in general, the
	    longitude (whose slope is \(0/1\)).

\item The integer \(n\) above  is called the \em writhe \em of \(k\); see, for example, in Definition~3.4 of~\cite{LickorishBook}.   The writhe is the sum of the signs of the crossings (the sign of a crossing is defined in Page~11 of~\cite{LickorishBook}).  In particular, the absolute value of the writhe cannot exceed the crossing number.  In that sense the blackboard framing is not too far from the longitude \(0/1\).

	\item The blackboard framing \em does \em depend on the diagram \(D\) and its slope changes by \(\pm 1\) under a Reidemeister move of type I (the sign depends on the sign of the crossing introduced/removed; see Figure~\ref{f:bf_longitude}). However, the writhe does not change under Reidemeister moves of type II and III.

	\item  When considering a link we will discuss the blackboard framing of each component while ignoring the remaining components.
	\item A link component \(\gamma\) that has no self crossing has writhe \(0\) (regardless of crossings with other components).  More generally, a link component \(\gamma\) that can be changed to the diagram with no self crossings using only Reidemeister move type II moves has writhe \(0\) (again ignoring other components).  This shows each clasp \(\gamma_{x}\) in our construction has writhe \(0\) (recall Figure~\ref{fig:switch}), and so the blackboard framing there is the longitude \(0/1\).

	\end{enumerate}
\end{remark}

\begin{\prop}\label{P:TriangulatingE(L)}
Let \(D \subset S \subset \mathbb{S}^{3}\) be a link diagram  of an \(n\) component link \(L \subset \mathbb{S}^{3}\) with \(c\) crossings.
Then there exits a triangulation \(\mathcal{T}_{E(L)}\) of \(E(L)\) so that the following conditions are satisfied:
	\begin{enumerate}
	\item \(\mathcal{T}_{E(L)}\) can be calculated in time \(O(c+n)\) and has \(O(c+n)\) tetrahedra.  Moreover, if \(D\) is connected then \(O(c+n)\) can be replaced by \(O(c)\) in both places.
	\item Both the meridian and the blackboard framing embed in the triangulation induced by \(\mathcal{T}_{E(L)}\) on each component of \(\partial E(L)\).
	\end{enumerate}
\end{\prop}

\begin{proof}
By applying Reidemeister moves of type~II to \(D\) we may assume that it is
  connected; this increases the number of crossing to \(c+2(n-1)\) at most.   By
applying a Reidemeister move of type~II to \(D\) at each monogon
we may assume that no complementary region
is a monogon; at worst, this triples the number of crossings.
By Remark~\ref{remark:BBframing}~(3), these moves do not change the writhe of the components of \(L\).
After these changes, the closure of each component of \(S \setminus D \) is an \(n\)-gon with \(n \geq
2\).  We will assume that \(D\) is a link diagram with \(c\) crossing satisfying these conditions and construct a
triangulation is time \(O(c)\) using \(O(c)\) tetrahedra; the proposition
will follow.

Ignoring the crossings, it is convenient to view \(D\)  as a  $4$-valent graph $G$ embedded in $S$ with $c$ vertices and \(2c\) edges. Note that the edges of \(G\) are the arcs obtained by cutting \(D\) (or \(L\)) at the crossings.

We now construct \(\mathcal{T}\), a triangulation of \(\mathbb{S}^{3}\) so that \(L\) embeds in the \(1\)-skeleton of \(\mathcal{T}\)
\begin{figure}
\begin{center}
\includegraphics[width=9cm]{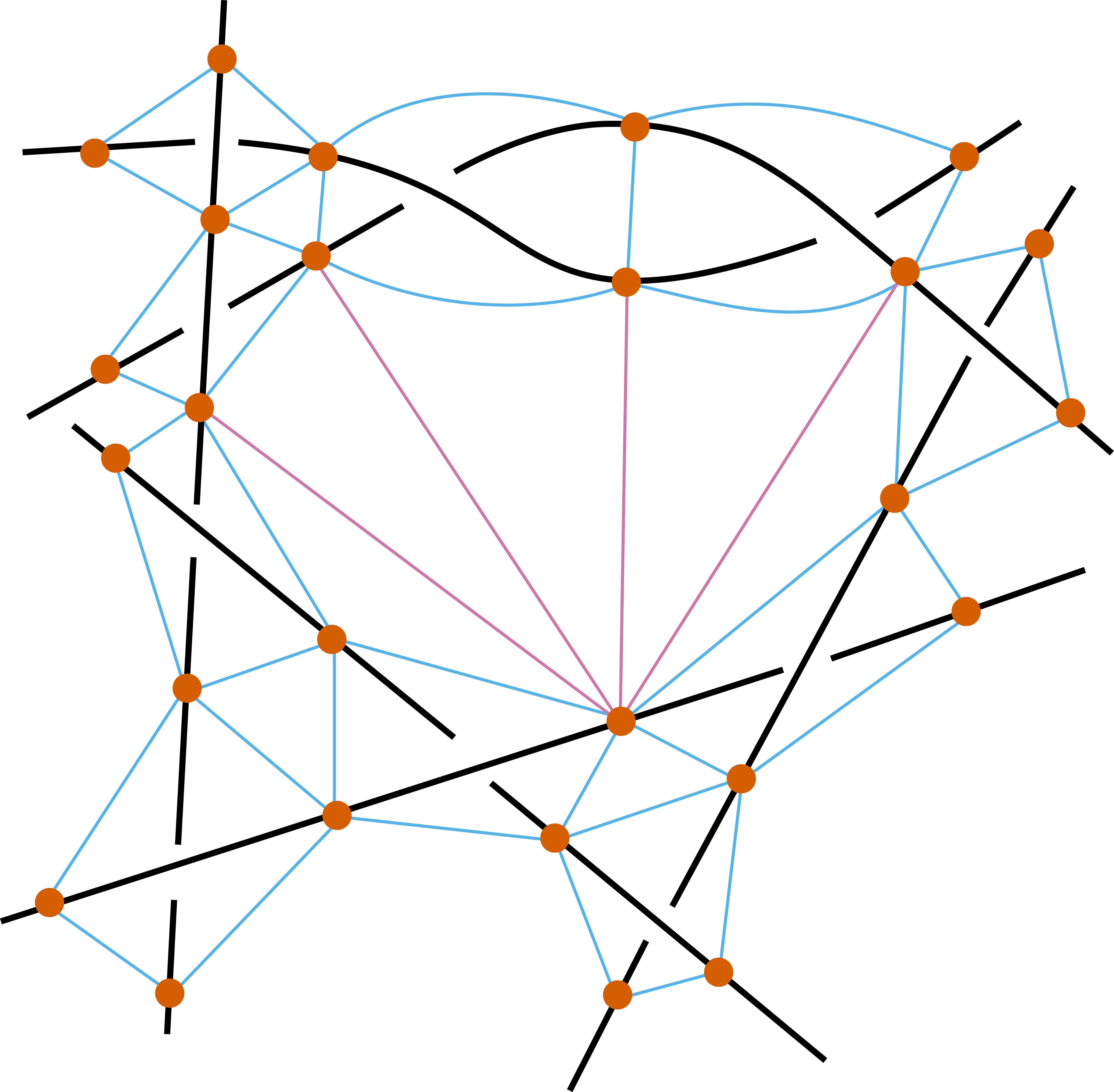}\hspace{1cm} \includegraphics[width=4cm]{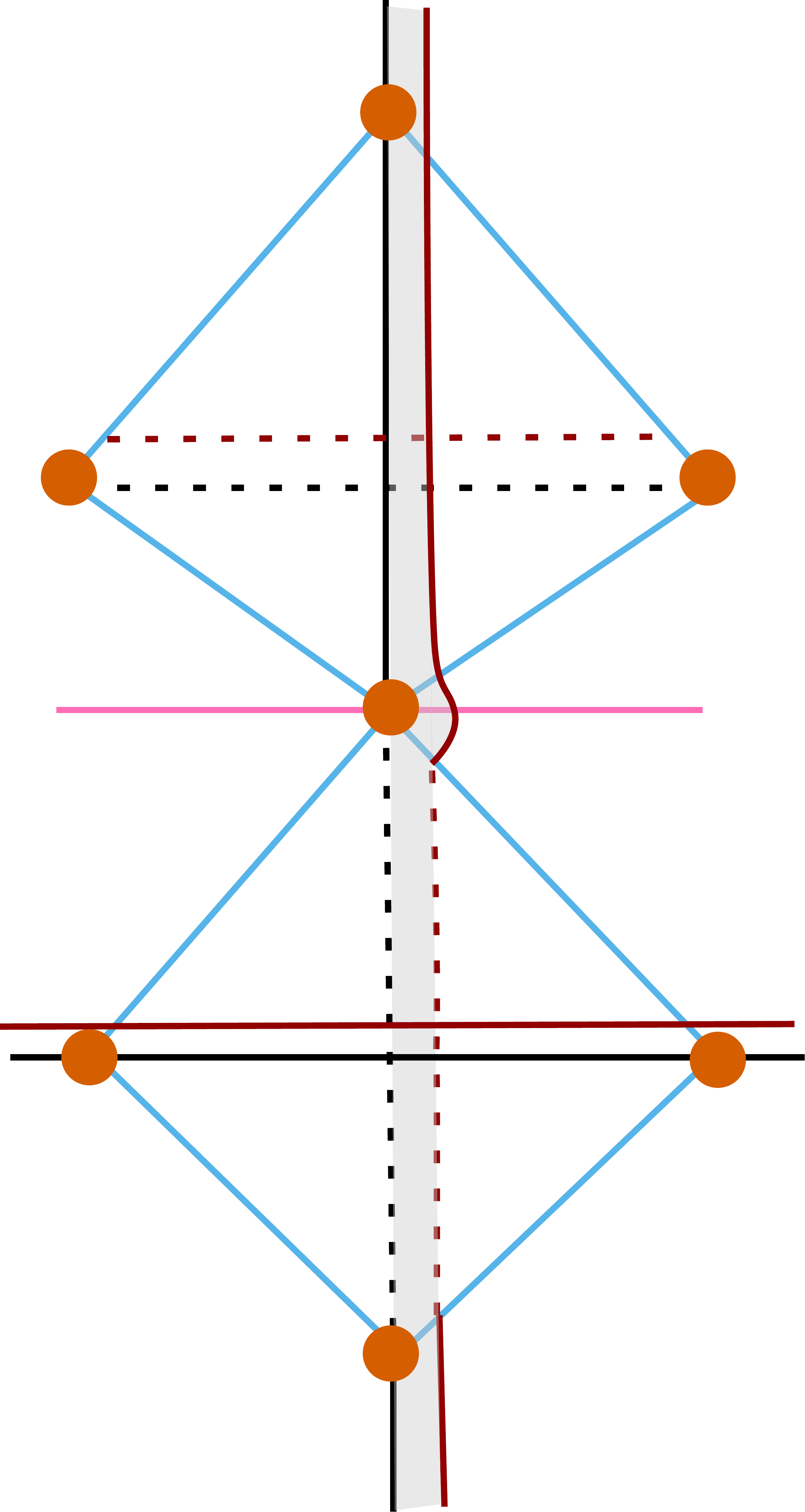}
\caption{A triangulation of a link exterior and the annulus in its $2$-skeleton.
}

\label{Figure:TriangulatingLinkExteriors}
\end{center}
\end{figure}
(for this construction see Figure~\ref{Figure:TriangulatingLinkExteriors}, where \(D\) is in black).  First we add a vertex in the middle of each
edge of \(G\) (orange disks in Figure~\ref{Figure:TriangulatingLinkExteriors}).
We connect the four new vertices around each crossing of \(L\) to form a
quadrilateral (light blue in Figure~\ref{Figure:TriangulatingLinkExteriors}).
Whenever two edges of \(G\) form a bigon, the two vertices in the middle of
these edges are connected with a \em single \em edge
(that is, a single light blue edge that is involved in two distinct quadrilaterals; see the top of Figure~\ref{Figure:TriangulatingLinkExteriors}).
Note that each quadrilateral, together with two subarcs of \(L\), forms the $1$-skeleton of a tetrahedron, and \(S\) intersects this tetrahedron in a disk whose boundary is the light blue quadrilateral.
Let \(\mathcal{T}'\) be the union of these tetrahedra.  The closure of each
  component of \(S \setminus \mathcal{T}'\) is an \(n\)-gon (for some \(n \geq
  3\); note that \(2\)-gons will not appear).  We subdivide each \(n\)-gon into
  \(n-2\) triangles, and add these triangles to \(\mathcal{T}'\).  This yields a \(3\)-complex
whose complement has two components, the closure
of each is a ball whose boundary inherits a triangulation.
Finally, we add a vertex in the interior of each
complementary ball, and cone the boundary of the ball to that vertex.
This yields a triangulation \(\mathcal{T}\) of
\(\mathbb{S}^3\) in which \(L\) embeds in the \(1\)-skeleton.  It is clear from
the construction that the time and number of tetrahedra needed are both linear in
\(c\).

Since \(L\) is embedded in the 1-skeleton of \(\mathcal{T}\), after two barycentric
subdivisions we obtain a triangulation in which  \(\mathrm{int}N(L)\),
an open neighborhood of \(L\), is embedded.  We remove
  \(\mathrm{int}N(L)\) $\mathbb{S}^3$ obtaining the desired
triangulation \(\mathcal{T}_{E(L)}\) of \(E(L)\).  Clearly, the time needed to construct this
triangulation and the number of tetrahedra are \(O(c)\).

It was shown in~\cite{hlp-ccklp-99} that the meridian is in the \(1\)-skeleton
of the boundary.  To see that the blackboard framing is embedded there as well,
note that \(k\) and a push off of \(k\) cobound an annulus in the
\(2\)-skeleton of the triangulation \(\mathcal{T}\) constructed above (see
Figure~\ref{Figure:TriangulatingLinkExteriors}, right).  It follows that the push off,
which is the blackboard framing, embeds in the \(1\)-skeleton of the boundary.
This completes the proof of Proposition~\ref{P:TriangulatingE(L)}.  \end{proof}

Our next goal is performing Dehn filling.  To that end we prove:

\begin{\lemm}\label{L:solidtorus}
Let $T$ be a triangulation of a torus with $n$ edges and $\gamma$ be a simple
cycle on the $1$-skeleton of $T$. Then one can compute a
triangulation of a solid torus $S$ in time $O(n)$ with $O(n)$ tetrahedra such that $\partial
S$ is simplicially isomorphic to $T$ and the image of $\gamma$ under this
isomorphism is a meridian of $S$.
\end{\lemm}

\begin{proof}

Starting from the triangulation $T$, we first attach a triangulated disk $D$ to \(\gamma\)
by adding one vertex \(u\) and coning the edges of \(\gamma\) to \(u\).  Note that \(D\) has $|\gamma|$
triangles.
Then we add a vertex $v$ and we form a cone with apex $v$
to every triangle of $T$, and also to every triangle of $D$ on both sides of
$D$.
This triangulation is not a simplicial complex yet
{(as it admits distinct \(3\)-simplices with the same vertex set)},
but can be made so by a
subdivision without changing the triangulation of $T$. The resulting
triangulation has $O(n)$ tetrahedra and can be built in linear time.
Topologically, we obtain a $3$-ball where two disks in the boundary have been identified to the single disk $D$. This yields a solid torus $S$, and the boundary of $S$ is by construction
simplicially isomorphic to $T$. Since $\gamma$ bounds the meridian disk $D$, it
is a meridian of $S$, which concludes the proof.
\end{proof}

In Section~\ref{S:reduction} we constructed, given \(\Phi \in \textsc{3-Sat}\),
a diagram of a link \(L\) in time \(O(|\Phi|^{2})\) and with  \(O(|\Phi|^{2})\)
crossings.  Proposition~\ref{P:TriangulatingE(L)} above yields a triangulation
\(\mathcal{T}\) of \(E(L)\) in time  \(O(|\Phi|^{2})\) and with
\(O(|\Phi|^{2})\) tetrahedra.  Let \(T_{i}\) be the boundary components
corresponding to the clasps, endowed with the triangulation induced by
\(\mathcal{T}_{E(L)}\).  By Proposition~\ref{P:TriangulatingE(L)}, the meridian
and the blackboard framing of each clasp are embedded in the \(1\)-skeleton of
the triangulation of \(T_{i}\); since the clasps have writhe \(0\) (recall Remark~\ref{remark:BBframing}~(5)),
the blackboard framing is, in fact, the longitude (Remark~\ref{remark:BBframing}~(3)).  Thus we see that the slopes \(1/0\) and \(0/1\) are embedded in the triangulation of \(T_{i}\), and by doing a constant number of barycentric subdivisions, we can refine $\mathcal{T}_{E(L)}$ so that $T_i$ contains, in its $1$-skeleton, a simple curve $\gamma_i$ realizing slope $3/2$. By Lemma~\ref{L:solidtorus}, we can compute a triangulation of a solid torus $S_i$ such that $\partial S_i$ is simplicially isomorphic to $T_i$ and the image of $\gamma_i$ under this isomorphism is a meridian of $S_i$. Now, gluing $S_i$ on $T_i$ yields a Dehn surgery corresponding to the surgery coefficient $3/2$.  By doing this for each clasp we obtain a triangulation of \(M(L)\).  In summary, we just proved:

\begin{cor}\label{C:computation}
A triangulation of $\mathbb{S}^3(L)$ with $O(|\Phi|^2)$ tetrahedra can be computed in time $O(|\Phi|^2)$.
\end{cor}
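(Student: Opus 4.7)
The plan is to assemble the corollary from three steps, invoking in turn the construction of Section~\ref{S:reduction}, Proposition~\ref{P:TriangulatingE(L)}, and Lemma~\ref{L:solidtorus}. First I would run the reduction to obtain a planar diagram of $L = L(\Phi)$ with $O(|\Phi|^{2})$ crossings and $O(|\Phi|)$ components, in time $O(|\Phi|^{2})$. Feeding this diagram into Proposition~\ref{P:TriangulatingE(L)} then produces, in time $O(|\Phi|^{2})$ and with $O(|\Phi|^{2})$ tetrahedra, a triangulation $\mathcal{T}_{E(L)}$ of $E(L)$ in which, on each boundary torus, the meridian and the blackboard framing embed as simple cycles in the induced $1$-skeleton.

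The literal components carry surgery coefficient $\emptyset$, so their boundary tori are simply kept as boundary tori of the final manifold; no further work is required there. The work to do is the $3/2$-Dehn filling on each clasp torus $T_{i}$. By Remark~\ref{remark:BBframing}(5), every clasp $\gamma_{x}$ has writhe $0$, so its blackboard framing is exactly the longitude; thus both the meridian $1/0$ and the longitude $0/1$ sit in the $1$-skeleton of $T_{i}$ as simple cycles. Since $3/2$ is a fixed rational, a constant number of barycentric subdivisions of $\mathcal{T}_{E(L)}$ produces, in each $T_{i}$, a simple cycle $\gamma_{i}$ of slope $3/2$, built as a short concatenation of meridional and longitudinal edges; this inflates the size of the triangulation only by a constant factor. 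Lemma~\ref{L:solidtorus} then yields, in time linear in the number of edges of $T_{i}$, a triangulated solid torus $S_{i}$ with $\partial S_{i}$ simplicially isomorphic to $T_{i}$ and with $\gamma_{i}$ mapping to a meridian of $S_{i}$; gluing along this isomorphism implements the $3/2$-filling on $T_{i}$.

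There are $O(|\Phi|)$ clasps, each boundary torus contains at most $O(|\Phi|^{2})$ edges, and summing the costs of all stages preserves the $O(|\Phi|^{2})$ bound on both time and total number of tetrahedra. The one step that requires any thought is realizing the slope $3/2$ in the $1$-skeleton after subdivision, but this is a small local combinatorial observation that relies only on the meridian and longitude being already present as edge paths and on the target slope being a fixed rational, so it poses no genuine obstacle.
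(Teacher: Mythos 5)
Your proof follows the same route as the paper's: construct the diagram ($O(|\Phi|^2)$ crossings), apply Proposition~\ref{P:TriangulatingE(L)} to triangulate $E(L)$ with the meridian and blackboard framing in the boundary $1$-skeleton, use that the clasps have writhe $0$ so the blackboard framing is the longitude (Remark~\ref{remark:BBframing}), perform a constant number of barycentric subdivisions to realize slope $3/2$ in each clasp torus, and then invoke Lemma~\ref{L:solidtorus} to glue in the filling solid tori. This matches the paper's argument step for step, with a correct additional remark that the $\emptyset$-labeled literal boundary tori are simply left unfilled.
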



\section{Proof of Corollary~\ref{corollary}}
\label{section:ProofOfCorollary}

Recall that in the course of the proof of our main theorem we reduced
\textsc{3-Sat} to $\textsc{Embed}_{3\rightarrow3}$ by constructing, given a \textsc{3-SAT}
formula \(\Phi\) (satisfying certain conditions), a connected 3-manifold \(\mathbb{S}^{3}(L)\) that embeds in \(\mathbb{S}^{3}\) exactly when \(\Phi\) is satisfiable.  By construction, \(\mathbb{S}^{3}(L)\) is connected, orientable, and every boundary component of \(\mathbb{S}^{3}(L)\) is a torus, that is, \(\mathbb{S}^{3}(L) \in \mathcal{M}_{\mathrm{tor}}\).

To complete the proof of Corollary~\ref{corollary}, fix a triangulated closed
orientable irreducible 3-manifold \(M\) admitting no essential torus.  We need
to show that the theorem holds with \(M\) in the role of the range 3-manifold.
The proof assumes familiarity with irreducible manifolds, connected sums and
prime decompositions, and in particular the uniqueness of
decompositions. We refer to Hatcher~\cite[Section 1.1]{h-nb3mt-00} for the relevant background.

We first assume that \(M\) is connected.
A key tool in the proof of Theorem~\ref{T:main} was the Fox Re-embedding Theorem.  Re-embedding plays a key role in the proof of the corollary, and we will prove the necessary version in Lemma~\ref{lemma:reembading} below ({\it cf.}~\cite[Theorem~7]{ScharlemannThompsoFox}).

Now, given a \textsc{3-SAT} formula \(\Phi\), let \(\mathbb{S}^{3}(L)\) be the triangulated 3-manifold constructed in the proof of Theorem~\ref{T:main}.  Recall that \(\mathbb{S}^{3}(L)\) was constructed in polynomial time (in the size of \(\Phi\)).  After performing two barycentric subdivisions on \(\mathbb{S}^{3}(L)\) we obtain a triangulation that admits a tetrahedron \(T\) that is embedded in the interior of \(\mathbb{S}^{3}(L)\).  These subdivisions are done in linear time in the size of the triangulation of \(\mathbb{S}^{3}(L)\) and hence in polynomial time in the size of \(\Phi\).  Similarly, after performing a barycentric subdivision on \(M\) we obtain a triangulation that admits a tetrahedron \(T'\) that is embedded in the interior of \(M\).  This subdivision is done is constant time (since \(M\) is fixed).  We fix \(T\) and \(T'\).

Let \(X\) be the triangulated 3-manifold obtained from \(M\) and
\(\mathbb{S}^{3}(L)\) by removing the interiors of \(T\) and \(T'\) and
identifying their boundaries.  Topologically, we obtain the connected sum \(X \cong M \#
\mathbb{S}^{3}(L)\), so \(X\) contains a 2-sphere \(S^{*}\) (namely, \(\partial
T = \partial T'\)) that bounds, on one side, a punctured copy of \(M\), say
\(M^{*}\). 
We fix the notation \(S^{*}\) and \(M^{*}\) for the remainder of the proof.

Now we want to know that $X$ embeds into $M$ if and only if the
formula $\Phi$ is satisfiable. If $\Phi$ is satisfiable, then
$\mathbb{S}^{3}(L)$ embeds into $\mathbb{S}^{3}$ and therefore $X$ embeds into
$M \# \mathbb{S}^{3} \cong M$. For the other implication, we need the following
lemma.

\begin{\lemm}[Re-embedding]
\label{lemma:reembading}

Suppose that \(X\) is a manifold with toroidal boundary that embeds in an irreducible atoroidal manifold \(M\). Then, after re-embedding, we may assume that the components of \(\overline{M \setminus X}\) are solid tori.
\end{\lemm}

\begin{remark}
\label{remark:fox}
In the proof of Lemma~\ref{remark:fox} we allow \(M \cong \mathbb{S}^{3}\), giving a proof of the Fox Re-embedding Theorem when the boundary of the submanifold consists of tori, which is the only case used in this paper.
\end{remark}

First, we finish the proof of Corollary~\ref{corollary} assuming the lemma,
then we prove the lemma.

By Lemma~\ref{lemma:reembading} we have that \(M\) is obtained from \(X \cong M
\# \mathbb{S}^{3}(L)\) by Dehn filling.  Since \(M\) is closed, any Dehn
filling of \(X\) is done along components of \(\partial \mathbb{S}^{3}(L)\).
Thus the result of such a Dehn filling is \(M \# Y\), where \(Y\) is obtained
by Dehn filling \(\mathbb{S}^{3}(L)\).  Since \(M\) is irreducible, by uniqueness of
prime decompositions, we have that \(M \# Y \cong M\) if and only if \(Y \cong \mathbb{S}^{3}\). Thus \(M \# \mathbb{S}^{3}(L)\) embeds in \(M\) if and only if \(\mathbb{S}^{3}(L)\) embeds in \(\mathbb{S}^{3}\), and by our main theorem, this happens exactly when \(\Phi\) is satisfiable.

If \(M\) is not connected we apply the argument to one of its components; this completes the proof of the corollary assuming Lemma~\ref{lemma:reembading}.

\begin{proof}[Proof of Lemma~\ref{lemma:reembading}]
Since \(M\) is irreducible (or is \(\mathbb{S}^{3}\)) any sphere embedded in
\(M\) bounds a ball on (at least) one side, and on the other side it bounds a punctured copy of \(M\) (in other words, any sphere realizes the decomposition \(M \# \mathbb{S}^{3}\)).  We will often use this below.

Let \(T_{1},\dots,T_{n}\) be the boundary components of \(X\) and suppose that \(T_{1},\dots,T_{i-1}\) bound solid tori as required by the lemma.  We will re-embed \(X\) so that the \(T_{i}\) bounds a solid torus as well, and the lemma will follow by induction.   Let \(V_{i}\) be the component of  \(\overline{M \setminus X}\) with \(T_{i} \subset \partial V_{i}\).   Note that   \(T_{i} = \partial V_{i}\), for otherwise \(T_{i}\) is nonseparating and one of the following holds:
	\begin{enumerate}
	\item \(T_{i}\) is incompressible: this contradicts the assumption that \(M\) admits no essential torus.
	\item \(T_{i}\) is compressible: then the sphere obtained by compressing is nonseparating, contradicting irreducibility of \(M\).
	\end{enumerate}
Since \(M\) is admits no essential torus, \(T_{i}\) compresses and we consider two possibilities:
	\begin{enumerate}
	\item \(T_{i}\) compresses in \(V_{i}\): compressing \(T_{i}\) yields a sphere contained in \(V_{i}\) which bounds a ball \(B\).   If \(B \subset V_{i}\) then \(V_{i}\) is a solid torus (note that if \(M \cong \mathbb{S}^{3}\) this will always be the case; we now assume \(M \not\cong \mathbb{S}^{3}\)).  Otherwise, \(M^{*} \subset B\), which is impossible since \(M \not\cong \mathbb{S}^{3}\).

	\item \(T_{i}\) compresses in \(X\): let \(D\) be a compressing disk for \(T_{i}\) that intersects \(S^{*}\) transversely and minimizes \(\#(D \cap S^{*})\) among all such disks.  If  \(D \cap S^{*} \neq \emptyset\) then an easy Euler characteristic argument implies that some component of \(S^{*}\) cut open along  \(D \cap S^{*}\) is a disk
(an \em innermost \em disk),
and this disk allows us to cut and paste \(D\) in its interior, obtaining a compressing disk that intersects \(S^{*}\) fewer times than \(D\); thus  \(D \cap S^{*} = \emptyset\).   Let \(S'\) be the sphere obtained by compressing \(T_{i}\); by construction, \(S' \cap M_{1} = \emptyset\).  We claim that \(S'\) bounds a ball on the side containing \(V_{i}\).  If \(M \cong \mathbb{S}^{3}\) this is trivial, since \(S'\) bounds balls on both sides. We assume as we may that \(M \not\cong S^{3}\).  If the ball \(S'\) bounds is not on the side containing \(V_{i}\) then it contains \(M_{1}\), which is impossible since \(M \not\cong \mathbb{S}^{3}\).
Now we remove \(V_{i}\) and replace it with a solid torus (this is the re-embedding) so that the meridian of the solid torus intersected the boundary of \(D\) once; note that \(S'\) still bounds a ball in the side containing \(V_{i}\), so we did not change the underlying 3-manifold \(M\).
	\end{enumerate}
\end{proof}


\section*{Acknowledgment}
We would like to thank Ken Baker and Atsushi Ishii for useful
correspondence related to this work, and the anonymous reviewers
of the SODA version for helpful comments.

\bibliographystyle{alpha}
\bibliography{e3hard}

\end{document}